\documentclass{amsart}
\usepackage{amssymb}
\usepackage{amsbsy}
\usepackage{amscd}
\usepackage{amsmath}
\usepackage{amsthm}
\usepackage{amsxtra}
\usepackage{latexsym}
\usepackage[mathscr]{eucal}
\usepackage{upgreek}
\usepackage{wasysym}
\usepackage[all,cmtip]{xy}

\usepackage{xcolor}
\definecolor{rouge}{rgb}{0.85,0.1,.4}
\definecolor{bleu}{rgb}{0.1,0.2,0.9}
\definecolor{violet}{rgb}{0.7,0,0.8}
\usepackage[dvipdfmx,colorlinks=true,linkcolor=bleu,urlcolor=violet,citecolor=rouge]{hyperref}

\newcommand{\bra}{{\langle}}
\newcommand{\ket}{{\rangle}}
\newcommand{\BRS}[1]{H^{\frac{\infty}{2}+0}_{f_{\theta}}(#1)}
\newcommand{\Lam}{\Lambda}

\newcommand{\on}{\operatorname}
\newcommand{\+}{\mathop{\oplus}}
\renewcommand{\*}{{\otimes}}
\newcommand{\mc}{\mathcal}
\newcommand{\mf}{\mathfrak}

\newcommand{\fing}{\mf{g}}

\newcommand{\affg}{\widehat{\mf{g}}}

\newcommand{\isomap}{{\;\stackrel{_\sim}{\to}\;}}
\newcommand{\Z}{\mathbb{Z}}
\newcommand{\C}{\mathbb{C}}

\newcommand{\W}{\mathscr{W}}
\newcommand{\ra}{\rightarrow}
\newcommand{\lam}{\lambda}

\def\leq{\leqslant}
\def\geq{\geqslant}

\DeclareMathOperator{\End}{End}

\DeclareMathOperator{\gr}{gr}

\DeclareMathOperator{\ad}{ad}


\theoremstyle{theorem}
\newtheorem{Th}{Theorem}[section]

\newtheorem{Pro}[Th]{Proposition}

\newtheorem{Lem}[Th]{Lemma}

\newtheorem{Co}[Th]{Corollary}

\theoremstyle{remark}

\newtheorem{Rem}[Th]{Remark}
\newtheorem{Conj}{Conjecture}

\title{Joseph ideals and lisse minimal $W$-algebras}
\subjclass[2010]{17B67, 17B69, 81R10}
\keywords{Joseph ideal, associated variety, Deligne exceptional series,
affine Kac-Moody algebra, affine $W$-algebra}
\author{Tomoyuki Arakawa}
\address{Research Institute for Mathematical Sciences, Kyoto University,
 Kyoto 606-8502 JAPAN}
\email{arakawa@kurims.kyoto-u.ac.jp}
\author{Anne Moreau}
\address{Laboratoire de Math\'{e}matiques et Applications, T\'{e}l\'{e}port 2 - BP 30179, Boulevard Marie et Pierre Curie, 86962 Futuroscope Chasseneuil Cedex, France}
\email{anne.moreau@math.univ-poitiers.fr}

\begin{document}
\maketitle

\begin{abstract}
 We consider a lifting of   Joseph ideals for the minimal nilpotent orbit
 closure to the setting of 
 affine Kac-Moody algebras and  find  new examples of affine vertex
 algebras
 whose associated varieties are minimal nilpotent orbit closures.
 As an application we obtain a new family  of  lisse ($C_2$-cofinite) $W$-algebras
 that are not coming from admissible representations of affine Kac-Moody algebras.
\end{abstract}

\section{Introduction}
Let  $\fing$ be a finite-dimensional simple Lie algebra over $\C$,
$(~|~)$ be a normalized invariant inner product,
i.e., $\frac{1}{2h^{\vee}}\times$Killing form.

Let $\affg=\fing[t,t^{-1}]\+ \C K \+ \C D$ be the Kac-Moody Lie algebra
associated with $\fing$ and $(~|~)$, with the commutation relations
\begin{align*}
 [x(m),y(n)]=[x,y](m+n)+ m(x|y)\delta_{m+n,0}K,\\
[D,x(m)]=mx(m),\quad [K,\affg]=0,
\end{align*}
where $x(m)=x\otimes t^m$.
For $k\in \C$, set
\begin{align*}
 V^k(\fing)=U(\affg)\*_{U(\fing[t]\+ \C K \+ \C D)}\C_k,
\end{align*}
where 
$\C_k$ is the one-dimensional representation of
$\fing[t]\+ \C K \+ \C D$ on which 
$\fing[t]\+ D$ acts trivially and $K$ acts as multiplication by $k$.
The space $V^k(\fing)$ is naturally a vertex algebra,
and it is called the {\em universal affine vertex algebra associated with
$\fing$
at level $k$}. 
By the PBW theorem, $V^k(\fing) \cong U(\fing[t^{-1}]t^{-1})$ as $\C$-vector spaces. 

Let $V_k(\fing)$ be the unique simple graded quotient of $V^k(\fing)$.
As a $\affg$-module,
$V_k(\fing)$ is isomorphic to the irreducible highest weight
representation
of $\affg$ with highest weight $k\Lam_0$, 
where $\Lam_0$ is the dual element of $K$. 

Let $X_V$ be the {\em associated variety} \cite{Ara12}
of a vertex algebra $V$,
which is the maximum spectrum of {\em Zhu's $C_2$-algebra}, 
$$ R_V := V/C_2(V).$$
In the case $V$ is a quotient of $V^k(\fing)$,
$V/C_2(V)=V/\fing[t^{-1}]t^{-2} V$ 
and we have a surjective Poisson algebra homomorphism
\begin{align*}
 \C[\fing^*]=S(\fing)\twoheadrightarrow V/\fing[t^{-1}]t^{-2}V,\quad
x\mapsto \overline{x(-1)}+\fing[t^{-1}]t^{-2}V,
\end{align*}
where $\overline{x(-1)}$ denotes the image of $x(-1)$ in the quotient 
$V$. 
Then $X_V$ is just the zero locus of the kernel of the above map in
$\fing^*$. It is
$G$-invariant and conic, where 
$G$ is the adjoint group of $\fing$.
Note that on the contrary to
the associated variety of a primitive ideal of $U(\fing)$, 
the variety $X_{V_k(\fing)}$ is not necessarily
contained in the nilpotent cone $\mc{N}$ of $\fing$.
In fact,  
$X_{V_k(\fing)}=\fing^*$ for a generic $k$ since 
$V_k(\fing)=V^k(\fing)$ in this case.

A conjecture of Feigin and Frenkel, proved in \cite{Ara09b},
states  that $X_{V_k(\fing)}\subset \mc{N}$ if
$V_k(\fing)$ is admissible \cite{KacWak89}.
In fact it is also believed that  the converse is true, that is,
$X_{V_k(\fing)}\subset \mc{N}$
only if
$V_k(\fing)$ is admissible, so that the condition 
$X_{V_k(\fing)}\subset \mc{N}$
gives a geometric characterization of admissible affine vertex algebras. 
One of the aims of this paper is to provide a 
counterexample of this fact, that is, there exist non-admissible
affine vertex algebras $V_k(\fing)$ such that
$X_{V_k(\fing)}\subset \mc{N}$.

Let $(e_\theta,h_\theta,f_\theta)$ be the $\mf{sl}_2$-triple associated with 
the highest positive root $\theta$ of $\fing$. 
Let $\mathbb{O}_{min}=G.f_\theta$ be the unique minimal non-trivial nilpotent
  orbit of $\fing$ which is of dimension $2h^\vee-2$, \cite{Wa99},  
where $h^{\vee}$ is the dual Coxeter number of $\fing$. 

  
Consider the {\em Deligne exceptional series} 
\begin{align*}
 A_1\subset A_2\subset G_2\subset D_4\subset F_4\subset E_6\subset
 E_7\subset E_8
\end{align*}
discussed in  \cite{De96,DeGr02}.
   
\begin{Th} \label{Th1} 
\begin{enumerate}
\item Assume that $\fing$ belongs to the Deligne exceptional series and that 
	  \begin{align*}
	   k= -\frac{h^{\vee}}{6}-1. 
	  \end{align*}
Then $X_{V_{k}(\fing)}=\overline{\mathbb{O}_{min}}.$

\item Assume that $\fing$ is of type $D_4$, $E_6$, $E_7$, $E_8$ and that 
	$k$ is an integer such that
	  \begin{align*}
	   -\frac{h^{\vee}}{6}-1
	   \leq k\leq -1. 
	  \end{align*}
Then $ X_{V_{k}(\fing)}=\overline{\mathbb{O}_{min}}$.
\item Assume that $\fing$ is of type $D_l$, $l\geq 5$.
Then
$ X_{V_{k}(\fing)}=\overline{\mathbb{O}_{min}}$ for $k=-2,-1$.
\end{enumerate}  
 \end{Th}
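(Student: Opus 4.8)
The plan is to establish the two inclusions $X_{V_k(\fing)}\subseteq\overline{\mathbb{O}_{min}}$ and $X_{V_k(\fing)}\supseteq\overline{\mathbb{O}_{min}}$ separately, and the second one is essentially free. Indeed $X_{V_k(\fing)}$ is closed, conic and $G$-invariant, and for every $k\notin\Z_{\geq 0}$ the algebra $V_k(\fing)$ is not integrable, hence not $C_2$-cofinite, so $\dim X_{V_k(\fing)}\geq 1$; in particular $X_{V_k(\fing)}\neq\{0\}$. Since $\overline{\mathbb{O}_{min}}=\mathbb{O}_{min}\sqcup\{0\}$ contains only the two $G$-orbits $\{0\}$ and $\mathbb{O}_{min}$, any nonzero closed $G$-invariant subset of $\overline{\mathbb{O}_{min}}$ must be all of $\overline{\mathbb{O}_{min}}$. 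Thus, granting the upper bound, equality follows at once, and the entire content of the theorem is the inclusion $X_{V_k(\fing)}\subseteq\overline{\mathbb{O}_{min}}$.

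To obtain the upper bound I would work inside $R_{V^k(\fing)}=S(\fing)$ and bound the kernel $J$ of the induced surjection $S(\fing)\twoheadrightarrow R_{V_k(\fing)}$, which is the Poisson ideal generated by the symbols of the maximal submodule $N_k\subset V^k(\fing)$. Recall that $\C[\overline{\mathbb{O}_{min}}]=\bigoplus_{d\geq 0}V_{d\theta}$ and that the defining ideal of $\overline{\mathbb{O}_{min}}$ is generated in degree two by $I_2=S^2(\fing)\ominus V_{2\theta}$. Hence it suffices to produce, at the prescribed level, enough conformal-weight-two singular vectors so that the image of $S^2(\fing)$ in $R_{V_k(\fing)}$ factors through the Cartan component $V_{2\theta}$, and then to check that the resulting symbol ideal cuts out $\overline{\mathbb{O}_{min}}$ as a reduced variety.

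The heart of the matter is locating these singular vectors. Representing a weight-two vector by its symbol $w\in S^2(\fing)$ (the $\fing(-2)|0\rangle$ contribution being absent away from the adjoint weight), a direct computation shows that $x(1)$ annihilates it for all $x\in\fing$ precisely when $T_k(w)=0$, where $T_k=\Phi_0+2k\cdot\iota\colon S^2(\fing)\to\End(\fing)$, $\iota$ is the embedding of $S^2(\fing)$ as symmetric operators via $(~|~)$, and $\Phi_0$ is the double-bracket operator $w=\sum a_{ij}x_i\odot x_j\mapsto\sum a_{ij}\ad(x_i)\ad(x_j)$. Both maps are $\fing$-equivariant, so by Schur's lemma $\Phi_0=c_\mu\,\iota$ on each isotypic component $V_\mu\subseteq S^2(\fing)$, and a conformal-weight-two singular vector with symbol in $V_\mu$ exists exactly when $k=-c_\mu/2$. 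On the trivial component one finds $c_0=2h^\vee$, recovering the critical level $k=-h^\vee$; on the distinguished complement $X_2=S^2(\fing)\ominus(V_{2\theta}\oplus\C\Omega)$ the Casimir bookkeeping gives $c_{X_2}=\tfrac{h^\vee}{3}+2$, that is, the level $k=-\tfrac{h^\vee}{6}-1$. For the Deligne series the decomposition of $S^2(\fing)$ and these eigenvalues are uniform (with $\Phi_0$ constant on $X_2$ even when the latter is reducible, as for $D_4$), which is exactly what makes the single level $-h^\vee/6-1$ work across the whole series and yields part (1).

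The two places I expect genuine difficulty are the following. First, the singular vector at $-h^\vee/6-1$ kills the component $X_2$ but not the Casimir direction $\C\Omega$; one must still show that the symbol ideal $(X_2)$ has radical equal to the defining ideal of $\overline{\mathbb{O}_{min}}$, equivalently that $\Omega$ is nilpotent in $R_{V_k(\fing)}$, equivalently that $V(X_2)\subseteq\mc{N}$. This is a genuinely geometric statement about the minimal orbit that I would verify using the representation theory of the Deligne series rather than by brute force. Second, for the integral levels in parts (2) and (3) a single quadratic singular vector no longer suffices: one must exhibit the relevant singular vectors at each integer $k$ in the stated range (located via the Kac--Kazhdan determinant), control the now larger maximal submodule $N_k$, and verify that its symbols still cut out exactly $\overline{\mathbb{O}_{min}}$ rather than collapsing the variety further. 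Here the lower bound obtained above is precisely what rules out a collapse to $\{0\}$, so the remaining work is entirely in showing that the symbol ideal is not too small.
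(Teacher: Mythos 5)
Your reduction of the theorem to the single inclusion $X_{V_k(\fing)}\subseteq\overline{\mathbb{O}_{min}}$ is exactly the paper's: the lower bound comes from the two-orbit structure of $\overline{\mathbb{O}_{min}}$ together with the fact that $X_{V_k(\fing)}=\{0\}$ forces $k\in\Z_{\geq 0}$ (the paper quotes \cite[Proposition 4.25]{Ara09b} for this). Moreover your Schur-lemma criterion for degree-two singular vectors is correct: one can check that $\iota^{-1}\Phi_0$ coincides with $-\sum_a \ad(x_a)\otimes\ad(x^a)$ on $S^2(\fing)$, so that $c_\mu=2h^{\vee}-\tfrac{1}{2}(\mu,\mu+2\rho)$, which on $W$ gives $c_W=h^{\vee}/3+2$ and hence the level $-h^{\vee}/6-1$; this is a genuinely different and cleaner route to these singular vectors than the paper's explicit Chevalley-basis construction following Garfinkle \cite{Gar} (Theorem \ref{Th2}). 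However, there are genuine gaps. First and most importantly, parts (2) and (3) --- the negative integer levels, which are the actual novelty of the theorem --- are not proved: ``locate the singular vectors via the Kac--Kazhdan determinant and control the larger maximal submodule'' restates the problem rather than solving it. The paper's key idea, absent from your proposal, is that the \emph{powers} $\sigma(w_i)^{n+1}$ of the quadratic singular vectors are themselves singular in $V^k(\fing)$ precisely at $k=n-h^{\vee}/6-1$ (resp.\ $k=n-2$ for $w_2$ in type $D_l$, $l\geq 5$), proved by a short computation of $f_{\theta}(1)\sigma(w_i)^{n+1}$. Once these are available there is no need to control the maximal submodule at all: the quotient $\tilde V_k(\fing)$ by the submodule they generate has symbol ideal $J$ with $w_i^{n+1}\in J\subseteq J_W$, and since $\sqrt{J}$ is $\fing$-stable and contains each $w_i$ it contains $W$, whence $\sqrt{J}=\sqrt{J_W}$ (Proposition \ref{Pro:variety-intermidiate}); your own lower bound, applied to $V_k(\fing)$ as a quotient of $\tilde V_k(\fing)$, then rules out the collapse to $\{0\}$ that you worry about.

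Second, the statement you yourself isolate as the crux of part (1) --- that the ideal generated by $W$ alone has radical $J_0$, equivalently that $\Omega$ is nilpotent modulo $(W)$ --- is never established; ``I would verify it using the representation theory of the Deligne series'' is not a proof. The paper obtains it from Gan--Savin \cite{GanSav04}: $J_W$ contains $\fing\cdot\Omega$, hence $\Omega^2$ (Lemma \ref{Lem:Gan-Savin}). Third, part (1) also covers $A_1$ and $A_2$, where your claimed uniformity across the Deligne series breaks down: for $A_1$ one has $W=0$, so there is no quadratic singular vector at $k=-4/3$ at all (the generating singular vector occurs in higher conformal weight), and for $A_2$ both Kostant's generation statement and Lemma \ref{Lem:Gan-Savin} are stated only outside type $A$, so additional verification would be needed. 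The paper sidesteps $A_1$, $A_2$, $G_2$, $F_4$ in one stroke by observing that $-h^{\vee}/6-1$ is an admissible level for these types and invoking \cite[Theorem 5.14]{Ara09b}. In summary, your framework agrees with the paper's and your criterion is a nice alternative in the quadratic case, but as written the proposal completes none of the three parts: part (1) is missing the radical lemma and the type-$A$ cases, and parts (2)--(3) are missing their essential construction.
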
 

Note that for $\fing$ of type $A_1$, $A_2$, $G_2$, $F_4$, the rational number 
 $-{h^{\vee}}/{6}-1$ is admissible. 
However for types $D_4$, $E_6$, $E_7$, $E_8$, 
the number  $-{h^{\vee}}/{6}-1$  is a negative integer which is
certainly non-admissible (\cite[Proposition 1.2]{KacWak08}).



A consequence of the fact $X_{V_k(\fing)}\subset \mc{N}$ is that
$V_k(\fing)$ has only finitely many simple modules in the category
$\mc{O}$ (cf.~Corollary \ref{cor:finitely-modules}), as in case
$V_k(\fing)$ is admissible \cite{Ada94,AdaMil95,Ada97,Per07,Per07typeF,AxtLee11,A12-2}.
If $\fing$ belongs to the Deligne exceptional series
outside the type $A$ 
and $k= -h^{\vee}/6-1$,
 it is possible to derive
 the classification of simple $V_k(\fing)$-modules that belong to
 $\mc{O}$
 from Joseph's result \cite{Jos98} in the following manner.

If $\fing$ is not of type $A$, 
it is known \cite{Joseph:1976kq,GanSav04} that there exists a unique completely prime ideal
$\mc{J}_0$ in $U(\fing)$, 
called the {\em Joseph ideal}, whose associated variety is
$\overline{\mathbb{O}_{min}}$, that is, $\overline{\mathbb{O}_{min}}$ 
is the zero locus in $\fing^*$ of ${\rm gr}\,\mc{J}_0$. 
As a by-product, we obtain a lifting to the Joseph ideal in the following sense. 
For a $\Z_{\geq 0}$-graded vertex algebra $V$, let $A(V)$ be its Zhu's
algebra \cite{Zhu96}. 
Such a vertex algebra $V$ is called a {\em chiralization} of an algebra $A$ if
$A(V)\cong A$. 
We claim that if $\fing$
belongs to the {Deligne exceptional series} outside the type $A$ 
and if $k=- h^\vee/6-1$, then $V_k(\fing)$ is a chiralization of $\C\oplus U(\fing)/\mc{J}_0$. 
Namely, 
$$A(V_k(\fing)) \cong U(\fing)/\mc{J}_{\mc{W}} \cong \C\times U(\fing)/\mc{J}_0,
$$
for some ideal $\mc{J}_{\mc{W}}$ (cf.~Proposition \ref{Pro:enlarged-Joseph} 
and Theorem \ref{Th:lifting-of-Joseph-ideal}). 
Hence
the classification of simple highest weight $U(\fing)/\mc{J}_0$-modules 
obtained
in \cite{Jos98} gives 
the classification of simple highest weight $V_k(\fing)$-modules thanks
to  Zhu's
theorem \cite{Zhu96},
which for types $G_2$, $D_4$,  $F_4$
reproves the earlier results
obtained in 
 \cite{AxtLee11} and
\cite{Per07typeF,Per13}.

Another consequence
  of the fact $X_{V_k(\fing)}\subset \mc{N}$ is  that the $D$-module on the moduli stack of
$G$-bundles on a curve obtained from $V_k(\fing)$ by the Harish-Chandra
localization \cite{BeiDri96,FreBen04} has its micro-local support inside 
the global nilpotent cone.
It would be very interesting to  consider the associated modular
functor (cf.\ \cite{FeiMal97}),
or
the corresponding conformal field
theory (cf.\ \cite{CreRid12,CreRid13}).
We hope to come back to this point in our future work.

In physics  literature 
the affine vertex algebras
in Theorem \ref{Th1} (1)
have been studied  in the work \cite{BeeLemLie15}
of Beem, Lemos, Liendo, Peelaers, Rastelli and van Rees
in connection with {\em four} dimensional superconformal field theory.
The associated varieties of these vertex algebras  seem to describe
 the Higgs  branch of the corresponding four dimensional theory.
We also hope to come back to this point in our future work.

\smallskip

Theorem \ref{Th1}, or its proof, has the following important application:

Let $\W^k(\fing,f_{\theta})$ be the {\em $W$-algebra associated with}
$(\fing,f_{\theta})$ at level $k$ \cite{KacRoaWak03},
which is a conformal vertex algebra with central charge
\begin{align*}
c(k)= \frac{k\dim \fing}{k+h^{\vee}}-6k+h^{\vee}-4
\end{align*}
provided that
$k\ne -h^{\vee}$.
Note that
if $\fing$ belongs to the Deligne exceptional series,
\begin{align*}
c(k)= -\frac{6(k+h^{\vee}/6+1)((h^{\vee}/6+1)k-
 (h^{\vee}-4)h^{\vee}/6)}{(k+h^{\vee})(h^{\vee}/6+1)
 },
\end{align*}
 so that $c(k)=0$  for $k=-h^{\vee}/6-1$.

Denote by $\W_k(\fing,f_{\theta})$ the unique simple  quotient
of $\W^k(\fing,f_\theta)$.
Since
$X_{\W^k(\fing,f_{\theta})}$ is naturally isomorphic to the Slodowy slice $\mc{S}_{min}$ 
at $f_{\theta}$ (\cite{De-Kac06,Ara09b}), with 
\begin{align*} 
\mc{S}_{min} := f_\theta + \fing^{e_\theta}, 
\qquad  \fing^{e_\theta} = \{x\in\fing \; |\; 
[x,e_\theta]=0\},
\end{align*} 
the variety $X_{\W_k(\fing,f_{\theta})}$ is a $\C^*$-invariant, Poisson
subvariety of $\mc{S}_{min}$.

It is known \cite{De-Kac06}
that the 
 (Ramond twisted) Zhu's algebra of $\W^k(\fing,f_{\theta})$ is naturally
isomorphic to the finite $W$-algebra
$U(\fing,f_{\theta})$ associated with $(\fing,f_{\theta})$ introduced by
 Premet \cite{Pre02}.

Premet \cite{Pre07}  has shown that 
the Joseph ideal is closely connected with 
one-dimensional representations of $U(\fing,f_{\theta})$.
The chiralization of $U(\fing)/\mc{J}_W$ explained above is 
closely related with
one-dimensional
representations of $\W^k(\fing,f_{\theta})$ as well.
The significant difference in the affine setting is that
$\W^k(\fing,f_{\theta})$ does not necessarily admit one-dimensional
representations.
In fact $\W^k(\fing,f_{\theta})$, $\fing\ne \mf{sl}_2$, admits one-dimensional representations
if and only if $\W_k(\fing,f_{\theta})=\C$, and this happens if and only if
$\fing$ belongs to the Deligne exceptional series
and $k=-h^{\vee}/6-1$, or 
$\fing$ is of type $C_l$ and $k=-1/2$ (cf.~Theorem \ref{TH:trivial-rep}). 

Note that
the trivial vertex algebra $\C$ is certainly a {\em lisse} vertex algebra.
Here,
recall that a vertex algebra $V$ is called lisse,
or {\em $C_2$-cofinite}, if 
$\dim X_V=0$.
Lisse vertex algebras may be regarded as an analogue of finite-dimensional algebras.
One of remarkable properties of a lisse vertex algebra $V$ is the modular
invariance of characters of modules \cite{Zhu96,Miy04}.
Further, if it is non-trivial and also {\em rational}, it is known \cite{Hua08rigidity}
that under some mild assumptions the category of $V$-modules forms a modular tensor category,
which for instance yields an invariant of $3$-manifolds, see \cite{Bakalov:2001kq}.

In  \cite{Ara09b},  
in order to approach
the Kac-Wakimoto conjecture \cite{KacWak08}
on the rationality of {\em exceptional $W$-algebras},
the first named author showed that each admissible affine vertex algebra
produces exactly one lisse simple $W$-algebra.
 More precisely, 
the associated variety of an admissible affine vertex algebra
$V_k(\fing)$
is 
isomorphic to $\overline{\mathbb{O}}$ for some nilpotent orbit
$\mathbb{O}$ of $\fing$,
and if we take the nilpotent element $f$ from this orbit $\mathbb{O}$, 
then $\W_k(\fing,f)$ is lisse.
Until very recently it has been  widely believed that these
$W$-algebras are all the lisse $W$-algebras, cf. \cite{KacWak08}.
However, it turned out that there are a lot more.

\begin{Th}\label{Th:new-lisse}
 \begin{enumerate}
  \item Let $\fing$ be of type $D_4$, $E_6$, $E_7$, $E_8$.
	 For any integer $k$ 
	that is equal to or greater than $-h^{\vee}/6-1$,
	the simple $W$-algebra
 $\W_k(\fing,f_{\theta})$ is lisse.
  \item Let $\fing$ be of type $D_l$ with $l\geq 5$.
	For any integer $k$ 
	that is equal to or greater than $-2$,
the simple $W$-algebra  $\W_k(\fing,f_{\theta})$ is lisse.

 \end{enumerate}
\end{Th}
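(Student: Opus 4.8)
The plan is to reduce Theorem~\ref{Th:new-lisse} to Theorem~\ref{Th1} through the compatibility of associated varieties with the minimal Drinfeld--Sokolov (quantized Hamiltonian) reduction $H^0_{DS,f_\theta}(-)$, exactly along the mechanism of \cite{Ara09b} recalled in the introduction. The starting point is the general principle that, for the reduction by $f_\theta$,
\begin{align*}
 X_{\W_k(\fing,f_{\theta})} = X_{V_k(\fing)}\cap \mc{S}_{min},
\end{align*}
valid whenever $H^0_{DS,f_\theta}(V_k(\fing))=\W_k(\fing,f_{\theta})\neq 0$, together with the identification $X_{\W^k(\fing,f_{\theta})}\cong \mc{S}_{min}$ recalled above. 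Thus the entire problem reduces to showing that the right-hand side is zero-dimensional.

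The geometric heart is the computation
\begin{align*}
 \overline{\mathbb{O}_{min}}\cap \mc{S}_{min}=\{f_\theta\}.
\end{align*}
Indeed, by Slodowy's transversality the slice $\mc{S}_{min}=f_\theta+\fing^{e_\theta}$ meets the orbit $\mathbb{O}_{min}$ in the single reduced point $f_\theta$, since $\dim \mc{S}_{min}=\dim \fing-\dim \mathbb{O}_{min}$ forces $\dim(\mathbb{O}_{min}\cap \mc{S}_{min})=0$; and the only other stratum of $\overline{\mathbb{O}_{min}}=\mathbb{O}_{min}\sqcup\{0\}$ is the origin, which does not lie on $\mc{S}_{min}$, because $0\in \mc{S}_{min}$ would force $[e_\theta,f_\theta]=0$, contradicting $[e_\theta,f_\theta]=h_\theta\neq 0$. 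Hence, once I know $X_{V_k(\fing)}=\overline{\mathbb{O}_{min}}$, I obtain $X_{\W_k(\fing,f_{\theta})}=\{f_\theta\}$, so $\dim X_{\W_k(\fing,f_{\theta})}=0$ and $\W_k(\fing,f_{\theta})$ is lisse.

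It remains to supply the input $X_{V_k(\fing)}=\overline{\mathbb{O}_{min}}$ in the stated ranges. For the negative integers $-h^{\vee}/6-1\leq k\leq -1$ (types $D_4,E_6,E_7,E_8$) and for $k=-2,-1$ (types $D_l$, $l\geq 5$) this is precisely Theorem~\ref{Th1}(2),(3); here $f_\theta$ lies in the open orbit of $X_{V_k(\fing)}$, the reduction is non-zero, and the displayed slice formula applies verbatim. This already yields both parts of the theorem for the negative portion of the range, which is the genuinely new content, the non-admissibility of these levels being noted after Theorem~\ref{Th1}.

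For the remaining integers $k\geq 0$ the module $V_k(\fing)$ is integrable, so $X_{V_k(\fing)}=\{0\}$ is again contained in $\overline{\mathbb{O}_{min}}$ and $V_k(\fing)$ is itself lisse; I would conclude that $\W_k(\fing,f_{\theta})$ is lisse by transporting this $C_2$-cofiniteness through the reduction. The main obstacle is exactly here: since $f_\theta\notin X_{V_k(\fing)}=\{0\}$ the reduction $H^0_{DS,f_\theta}(V_k(\fing))$ vanishes, so $\W_k(\fing,f_{\theta})$ is no longer the reduction of $V_k(\fing)$ but only the simple quotient of the universal $\W^k(\fing,f_{\theta})$, and the naive slice formula would wrongly predict the empty set. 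To handle this I would track the integrability singular vector generating the kernel of $V^k(\fing)\twoheadrightarrow V_k(\fing)$: its image under the reduction is a non-zero singular vector of $\W^k(\fing,f_{\theta})$ whose symbol in $R_{\W^k(\fing,f_{\theta})}=\C[\mc{S}_{min}]$, together with the relations coming from the integrable affine subalgebra $V(\fing^{\natural})\subset \W^k(\fing,f_{\theta})$ at the simple quotient, should cut the Kazhdan-conic Poisson subvariety $X_{\W_k(\fing,f_{\theta})}\subseteq \mc{S}_{min}$ down to the fixed point $\{f_\theta\}$. Verifying that these reduced relations genuinely force dimension zero is the delicate point, and it is what separates the positive-level case from the clean reduction argument available in the negative range.
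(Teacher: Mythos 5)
Your argument for the negative integer levels is correct and is essentially the paper's own mechanism: for $k\notin\Z_{\geq 0}$ one has $\BRS{V_k(\fing)}=\W_k(\fing,f_{\theta})$ and $X_{\W_k(\fing,f_{\theta})}=X_{V_k(\fing)}\cap\mc{S}_{min}$ by Theorem \ref{Th:previous} (2), (3), and Theorem \ref{Th1} (2), (3) supplies the input $X_{V_k(\fing)}=\overline{\mathbb{O}_{min}}$, whose intersection with the slice is the single point $f_\theta$. But the theorem asserts lisse-ness for \emph{all} integers $k\geq -h^{\vee}/6-1$ (resp.\ $\geq -2$), and for $k\in\Z_{\geq 0}$ your proof has a genuine gap, which you yourself flag: since $\BRS{V_k(\fing)}=0$ in that range, the slice formula applied to the simple quotient gives nothing, and your proposed repair --- tracking the integrable singular vector $e_{\theta}(-1)^{k+1}$ and arguing that the induced relations in $\C[\mc{S}_{min}]$ cut $X_{\W_k(\fing,f_{\theta})}$ down to a point --- is never carried out. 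As written it is a plan, not a proof, and it is not the route the paper takes.

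The paper closes exactly this gap by never applying the reduction to the simple quotient $V_k(\fing)$ at all. It works instead with the intermediate quotient $\tilde{V}_{k_n}(\fing)=V^{k_n}(\fing)/N$, where $N$ is generated by the powers $\sigma(w_i)^{n+1}$ (resp.\ $\sigma(w_1)^{n+l-3}$, $\sigma(w_2)^{n+1}$ in type $D_l$, $l\geq 5$) of the degree-two singular vectors of Theorem \ref{Th2}. Proposition \ref{Pro:variety-intermidiate} gives $X_{\tilde{V}_{k_n}(\fing)}=\overline{\mathbb{O}_{min}}$ for \emph{every} $n\geq 0$: that computation only uses the ideal of $S(\fing)$ generated by the $w_i$ together with Lemma \ref{Lem:Gan-Savin}, so it is completely insensitive to whether $\tilde{V}_{k_n}(\fing)$ is simple (for $k_n\geq 0$ it is not). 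Since Theorem \ref{Th:previous} (3) holds for \emph{any} quotient of $V^{k_n}(\fing)$, it follows that $\BRS{\tilde{V}_{k_n}(\fing)}$ is nonzero and lisse; by exactness (Theorem \ref{Th:previous} (1)) it is a quotient of $\W^{k_n}(\fing,f_{\theta})$, hence it surjects onto the unique simple quotient $\W_{k_n}(\fing,f_{\theta})$, and lisse-ness passes to quotients. This substitution of $\tilde{V}_k(\fing)$ for $V_k(\fing)$ is the one idea missing from your proposal; with it, the whole range of integers is handled uniformly and your separate (and incomplete) treatment of $k\geq 0$ becomes unnecessary.
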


In the case that $k=-h^{\vee}/6$,
the first statement of Theorem \ref{Th:new-lisse} is a recent result of
Kawasetsu \cite{Kawa15}.
Kawasetsu  actually proved that  $\W_{-h^{\vee}/6}(\fing,f_{\theta})$ is
rational and
$C_2$-cofinite if $\fing$ belongs to the Deligne exceptional series,
providing a first (surprising) example of rational and $C_2$-cofinite $W$-algebras
that are not coming from admissible representations of $\affg$.
Our present work is motivated by his result.
It would be very
interesting to know whether the lisse $W$-algebras appearing in Theorem
\ref{Th:new-lisse}
are rational or not.
We hope to come back to this point in future work.

\subsection*{Acknowledgments.}
A part of this work was done while the first named author was staying at
the Universit\'{e} de Poitiers in October, 2014
and at the Centro di Ricerca Matematica Ennio De Giorg in
Pisa in December, 2014 and in January, 2015.
He would like to thank both institutes.
He would also like to thank Kazuya Kawasetsu for useful
discussions, and  Dra{\v{z}}en Adamovi{\'c}
for bringing the article \cite{Per13} to his attention.
After submitting the first version of the present paper he had stimulating discussions with
Leonardo Rastelli, Hiraku Nakajima, Takahiro Nishinaka and Yuji Tachikawa.
He would like to thank all of them.
His research is supported by JSPS KAKENHI Grant Numbers 25287004 and 26610006.

The second named author would like to thank Rupert Wei Tze Yu for bringing 
the article \cite{GanSav04} to her attention, and Pierre Torasso for useful discussions 
about central characters. 
Her research is supported by the ANR Project GERCHER Grant number ANR-2010-BLAN-110-02. 

Both authors thank Dra{\v{z}}en Adamovi{\'c}, Victor Kac, Ozren Per{\v{s}}e and Weiqiang Wang 
for their comments on the first version of this article. 

\section{Minimal nilpotent orbit closures and Joseph ideals}
Let $J_0$ be the prime ideal of $S(\fing)$ 
corresponding to the minimal nilpotent orbit closure $\overline{\mathbb{O}_{min}}$ 
in $\fing^*$. 

Suppose that $\fing$ is not of type $A$. 
According to Kostant,
$J_0$ is generated 
by a $\fing$-submodule $L_{\fing}(0)\+ W$ in $S^2(\fing)$,
such that
\begin{align*}
 S^2(\fing)=L_{\fing}(2\theta)\+ L_{\fing}(0)\+ W,
\end{align*}
where $L_{\fing}(\lam)$ is the irreducible   representation of $\fing$ with highest
weight $\lam$
and $\theta$ is the highest root of $\fing$. 

Note that the above decomposition of $S^2(\fing)$ 
still holds in type $A$, \cite[Chapter IV, Proposition 2]{Gar}, 

%

Also, note that
 $L_{\fing}(0)=\C\Omega$ 
where $\Omega$ is the Casimir element in $S(\fing)$.
 \begin{Lem}\label{Lem:Gan-Savin}
Suppose that $\fing$ is not of type $A$.
The ideal $J_W$ in $S(\fing)$ generated by $W$ contains 
  $\Omega^2$, and hence, $\sqrt{J_W}=J_0$.
 \end{Lem}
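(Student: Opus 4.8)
The plan is to isolate the one substantive point, namely the membership $\Omega^2\in J_W$; the equality $\sqrt{J_W}=J_0$ then follows formally. Since $W\subset L_{\fing}(0)\oplus W$, we have $J_W\subset J_0$, and because $J_0$ is prime (it is the ideal of the irreducible variety $\overline{\mathbb{O}_{min}}$) this gives $\sqrt{J_W}\subseteq J_0$. Conversely, once $\Omega^2\in J_W$ is known we get $\Omega\in\sqrt{J_W}$, and as $W\subseteq J_W\subseteq\sqrt{J_W}$ as well, all the generators $\C\Omega\oplus W$ of $J_0$ lie in $\sqrt{J_W}$; hence $J_0\subseteq\sqrt{J_W}$ and equality holds. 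So I would devote the argument entirely to $\Omega^2\in J_W$.

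Since $J_W$ is generated in degree $2$ by $W$, its degree-$4$ piece is $S^2(\fing)\cdot W$, and the claim becomes the representation-theoretic statement $\Omega^2\in S^2(\fing)\cdot W\subset S^4(\fing)$. The key is to control the quotient $Q:=S^4(\fing)/S^2(\fing)W$ as a $\fing$-module via Kostant's multiplicity-free decomposition $S(\fing)/J_0=\bigoplus_{k\geq 0}L_{\fing}(k\theta)$. As $J_0$ is generated in degree $2$, one has $(J_0)_4=\Omega S^2(\fing)+S^2(\fing)W$, so $(J_0)_4/S^2(\fing)W$ is the image of $\Omega S^2(\fing)=\Omega L_{\fing}(2\theta)\oplus\C\Omega^2\oplus\Omega W$; since $\Omega W\subset S^2(\fing)W$, this is a $\fing$-module quotient of $L_{\fing}(2\theta)\oplus\C$. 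Because $S^4(\fing)/(J_0)_4=L_{\fing}(4\theta)$ has no invariants and $L_{\fing}(2\theta)$ has none either, the invariant space $Q^{\fing}$ is at most one-dimensional and is spanned by the image of $\Omega^2$. Thus $\Omega^2\in J_W$ is \emph{equivalent} to $Q^{\fing}=0$, that is, to the assertion that every $\fing$-invariant quartic already lies in $S^2(\fing)W$.

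To produce such invariants I would, for each irreducible constituent $W_j$ of $W$, take the canonical element $\sum_i w_i^{(j)}\otimes w_i^{(j)}\in(W_j\otimes W_j)^{\fing}\subset(S^2(\fing)\otimes W)^{\fing}$ and push it through multiplication, yielding invariant quartics $P_j:=\sum_i (w_i^{(j)})^2\in S^2(\fing)W$. It remains to show these span $(S^4(\fing))^{\fing}$, forcing $Q^{\fing}=0$. The nonvanishing of their $\Omega^2$-component is clean: the invariant Laplacian $\Delta=\sum_a\partial_{e_a}^2$ (orthonormal basis $\{e_a\}$, $\fing$-equivariant since the form is invariant) satisfies $\Delta^2(\Omega^2)\neq 0$ while annihilating every primitive harmonic quartic, and for a trace-form orthonormal basis of $W_j$ one computes $\Delta^2(P_j)=c\,\dim W_j\neq 0$. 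The main obstacle is the \emph{linear independence} of the $P_j$, which is the genuinely type-dependent part. When the only quartic invariant is $\Omega^2$ (the exceptional members $G_2,F_4,E_6,E_7,E_8$), a single nonzero $P_j$ already finishes the argument. In type $D$, however, there are further primitive quartic invariants, and $W$ splits into correspondingly many constituents; here one must verify that the $P_j$ are linearly independent in $(S^4(\fing))^{\fing}$ by evaluating the several functionals $\Delta^2(\,\cdot\,)$ and the projections onto the primitive quartics. This independence is precisely the computation underlying the Gan--Savin result, and it is the step I expect to require the most care.
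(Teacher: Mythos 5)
Your formal reduction is fine and agrees with what the paper needs: granting $\Omega^2\in J_W$, primeness of $J_0$ and $J_W\subseteq J_0$ give $\sqrt{J_W}\subseteq J_0$, while $\Omega\in\sqrt{J_W}$ together with $W\subseteq J_W$ gives the reverse inclusion. Your analysis of $Q=S^4(\fing)/S^2(\fing)W$ is also correct: since $(J_0)_4=\Omega S^2(\fing)+S^2(\fing)W$ and $S^4(\fing)/(J_0)_4\cong L_{\fing}(4\theta)$, the invariants of $Q$ are spanned by the image of $\Omega^2$, so the lemma is indeed equivalent to $(S^4(\fing))^{\fing}\subseteq S^2(\fing)W$. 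For $G_2$, $F_4$, $E_6$, $E_7$, $E_8$, where $(S^4(\fing))^{\fing}=\C\Omega^2$, your argument is essentially complete: the canonical element of $(W_1\otimes W_1)^{\fing}$ (this uses the self-duality of $W_1$, which does hold here) produces $P_1\in(S^2(\fing)W)^{\fing}$, and the evaluation $\Delta^2(P_1)=c\,\dim W_1\neq 0$ does work out, forcing $P_1$ to be a nonzero multiple of $\Omega^2$.

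The genuine gap is in the classical types. The lemma asserts the statement for every $\fing$ not of type $A$, and the paper invokes it (through Proposition \ref{Pro:variety-intermidiate}) precisely for type $D_l$; but for $B_l$, $C_l$, $D_l$ one has $\dim(S^4(\fing))^{\fing}\geq 2$ (equal to $3$ for $D_4$), and there your proof requires the linear independence of the $P_j$, which you explicitly leave unproved (``the step I expect to require the most care''). That step is not a formality: it amounts to computing the harmonic components of each $P_j$, i.e.\ to redoing in the symmetric algebra the kind of computation that Gan and Savin carry out; moreover your case discussion treats only type $D$, passing over $B_l$ and $C_l$, which the lemma also covers. The paper sidesteps all of this: it quotes from the proof of \cite[Theorem 3.1]{GanSav04} the stronger statement that $J_W$ already contains $\fing\cdot\Omega$; since $\Omega\in S(\fing)\fing$, it follows at once that $\Omega^2\in S(\fing)\fing\,\Omega\subseteq J_W$, uniformly in $\fing$, with no invariant-theoretic case analysis. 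So your framework is sound and would give a more self-contained proof if completed (the number of irreducible constituents of $W$ does match $\dim(S^4(\fing))^{\fing}$ in every type, which is encouraging), but as written the crucial independence step for types $B$, $C$, $D$ is missing, and it is exactly the content that the paper imports from \cite{GanSav04}.
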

 \begin{proof}By 
the proof of \cite[Theorem 3.1]{GanSav04}
$J_W$ contains $\fing \cdot \Omega$,  and the assertion follows.
 \end{proof}

The structure of $W$ was determined by Garfinkle \cite{Gar}.
Set
\begin{align*}
\fing(j)=\{x\in \fing\mid [h_{\theta},x]=2 jx\}.
\end{align*}
Then 
\begin{align*}
 \fing&=\fing({-1})\+\fing(-1/2)\+\fing({0})\+\fing(1/2) 
 \+ \fing(1),\\& \fing({-1})=\C f_{\theta},
 \ \fing(1)=\C e_{\theta},\
 \fing(0)=\C h_{\theta}\+ \fing^{\natural},
 \ \fing^{\natural}=\{x\in \fing(0)\mid (h_{\theta}|x)=0\}.
\end{align*}
The subalgebra
$\fing^{\natural}$ is a reductive subalgebra of $\fing$
whose simple roots are the simple roots of $\fing$
perpendicular to $\theta$.
Write 
\begin{align*}
[\fing^{\natural},\fing^{\natural}]=\bigoplus_{i\geq 1} \fing_i
\end{align*}
as a
direct sum of simple summands, and let $\theta_i$ be the highest root of
$\fing_i$.

If $\fing$ is neither of type $A_l$ nor $C_l$,
\begin{align*}
 W=\bigoplus_{i\geq 1}L_{\fing}(\theta+\theta_i).
\end{align*}

If $\fing$ is of type $C_l$,
then $\fing^{\natural}$
 is simple of type $C_{l-1}$,
so that there is a unique $\theta_1$,
and we have
\begin{align*}
 W=L_{\fing}(\theta+\theta_1)\+ L_{\fing}(\frac{1}{2}(\theta+\theta_1)).
\end{align*}

%
%

If $\fing$ is not of type $A$, 
it is known \cite{Joseph:1976kq,GanSav04} that there exists a unique completely prime ideal
$\mc{J}_0$ in $U(\fing)$,
called the {\em Joseph ideal}, whose associated variety is
$\overline{\mathbb{O}_{min}}$. 
It is known that $\mc{J}_0$ is 
maximal and  {\em primitive}.
By \cite{Gar,GanSav04} 
$\mc{J}_0$ is generated by $W$ and $\Omega-c_0$,
where $W$ is identified with a $\fing$-submodule of $U(\fing)$ by the
$\fing$-module isomorphism
$S(\fing)\cong U(\fing)$ and 
$c_0$ is the eigenvalue of $\Omega$ for the infinitesimal character that
Joseph obtained in \cite[Table p.15]{Joseph:1976kq}.
We have \begin{align*}
      \gr{\mc{J}_0}=J_0=\sqrt{J_W}
     \end{align*}
and this shows that $\mc{J}_0$ is indeed completely prime.

Let $\mc{J}_W$ be the two-sided ideal of $U(\fing)$ generated by $W$.
 \begin{Pro}\label{Pro:enlarged-Joseph}
We have an  algebra isomorphism
 \begin{align*}
U(\fing)/\mc{J}_W\cong \C \times U(\fing)/\mc{J}_0. 
 \end{align*}
 \end{Pro}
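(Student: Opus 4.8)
The plan is to build the obvious comparison map and then prove it is an isomorphism by a filtered dimension count, with Lemma \ref{Lem:Gan-Savin} as the only substantial input. First I would record the two quotient maps out of $U(\fing)/\mc{J}_W$. Since $\mc{J}_W\subseteq\mc{J}_0$ there is the tautological surjection $U(\fing)/\mc{J}_W\twoheadrightarrow U(\fing)/\mc{J}_0$. On the other hand the image of $W\subset S^2(\fing)$ under $S(\fing)\cong U(\fing)$ lies in the augmentation ideal $\ker\epsilon$ (the augmentation $\epsilon$ kills $\fing$, hence all of $\fing\cdot\fing+\fing$), so $\mc{J}_W\subseteq\ker\epsilon$ and $\epsilon$ descends to $U(\fing)/\mc{J}_W\twoheadrightarrow\C$. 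Together these give a morphism $\phi\colon U(\fing)/\mc{J}_W\to \C\times U(\fing)/\mc{J}_0$.

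Next I would prove surjectivity. The Casimir $\Omega$ is central and $\phi(\overline{\Omega})=(0,c_0)$, since $\Omega$ acts by $0$ on the trivial module and by $c_0$ on $U(\fing)/\mc{J}_0$. The crucial point is that $c_0\neq 0$: if $c_0=0$ then $\Omega-c_0=\Omega\in\ker\epsilon$, so $\mc{J}_0\subseteq\ker\epsilon$, and as $\mc{J}_0$ is maximal and $\ker\epsilon\neq U(\fing)$ this forces $\mc{J}_0=\ker\epsilon$, contradicting the fact that their associated varieties $\overline{\mathbb{O}_{min}}$ and $\{0\}$ differ. Given $c_0\neq 0$, the elements $c_0^{-1}\phi(\overline{\Omega})=(0,1)$ and $\phi(1)-(0,1)=(1,0)$ lie in the image, so both central idempotents of $\C\times U(\fing)/\mc{J}_0$ are hit; since $\phi$ already surjects onto each factor it is surjective.

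For injectivity I would compare associated graded algebras for the PBW filtration. On the source $\gr\bigl(U(\fing)/\mc{J}_W\bigr)=S(\fing)/\gr\mc{J}_W$ is a quotient of $S(\fing)/J_W$, because the principal symbols of the generators of $\mc{J}_W$ are exactly $W$, which generate $J_W$. Lemma \ref{Lem:Gan-Savin} gives $\fing\cdot\Omega\subset J_W$, whence $S(\fing)\Omega\subseteq J_W+\C\Omega$ and therefore $J_0=J_W+\C\Omega$; as $\Omega\notin W=(J_W)_2$ one gets a short exact sequence $0\to\C\overline{\Omega}\to S(\fing)/J_W\to S(\fing)/J_0\to 0$ whose extra line sits in degree $2$. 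Combined with $\gr\mc{J}_0=J_0$ this yields, for every $n$,
\[
\dim F_n\bigl(U(\fing)/\mc{J}_W\bigr)\ \leq\ \dim\bigl(S(\fing)/J_0\bigr)_{\leq n}+[\,n\geq 2\,]=\dim F_n\bigl(U(\fing)/\mc{J}_0\bigr)+[\,n\geq 2\,].
\]

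Finally I would match this against the induced filtration $\tilde F_n:=\phi(F_n)$ on the target. The second projection maps $\tilde F_n$ onto $F_n\bigl(U(\fing)/\mc{J}_0\bigr)$, and because $(1,0)\in\tilde F_2$ (reached from $\overline{\Omega}$ in degree $2$, using $c_0\neq 0$), for $n\geq 2$ the kernel of this projection on $\tilde F_n$ is precisely $\C(1,0)$; hence $\dim\tilde F_n=\dim F_n\bigl(U(\fing)/\mc{J}_0\bigr)+1$. The displayed bound now reads $\dim F_n\leq\dim\tilde F_n$, while surjectivity of $\phi$ gives $\dim\tilde F_n\leq\dim F_n$. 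Equality for all $n\geq 2$ forces $F_n\cap\ker\phi=0$, so $\ker\phi=0$ and $\phi$ is an isomorphism. The main obstacle is precisely the verification that $c_0\neq 0$, which is what makes the two quotients genuinely distinct and powers both the surjectivity and the fact that the extra idempotent is reached in degree $2$; once it is secured (here from maximality of $\mc{J}_0$ together with the associated-variety computation), the remaining work is the filtered bookkeeping resting on Lemma \ref{Lem:Gan-Savin}.
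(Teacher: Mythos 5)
Your proof is correct, but it takes a genuinely different route from the paper's. The paper argues internally, with no comparison map: quoting the proof of Gan--Savin's Theorem 3.1, it notes that $\mc{J}_W$ contains $(\Omega-c_0)\fing$, hence contains $(\Omega-c_0)\Omega$; since $c_0\neq 0$, the image of $\Omega/c_0$ is then a central idempotent of $U(\fing)/\mc{J}_W$, so the algebra splits as $U(\fing)/\bra \mc{J}_W,\Omega\ket\times U(\fing)/\bra \mc{J}_W,\Omega-c_0\ket$, and the two factors are identified at once: $\bra\mc{J}_W,\Omega-c_0\ket=\mc{J}_0$ because $\mc{J}_0$ is generated by $W$ and $\Omega-c_0$, while $\bra\mc{J}_W,\Omega\ket\supseteq\fing$ forces the other factor to be $\C$. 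You never use the noncommutative relation $(\Omega-c_0)\fing\subset\mc{J}_W$ at all: your surjectivity only exploits the idempotents of the \emph{target}, and your injectivity is carried by the symbol-level relation $\fing\cdot\Omega\subset J_W$ from Lemma \ref{Lem:Gan-Savin} together with $\gr\mc{J}_0=J_0$, via the filtered dimension bound. What the paper's route buys is brevity --- the splitting and the identification of the factors are each a single line, with no Hilbert-series bookkeeping. What your route buys is threefold: it needs only the commutative shadow of Gan--Savin's computation rather than its lift to $U(\fing)$; it \emph{proves} $c_0\neq 0$ (from maximality of $\mc{J}_0$ and the mismatch of associated varieties $\overline{\mathbb{O}_{min}}\neq\{0\}$) where the paper simply asserts it, implicitly relying on Joseph's table; and, as an unstated byproduct, your forced equalities show $\gr\mc{J}_W=J_W$, i.e.\ the ideal $\mc{J}_W$ has the expected associated graded. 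Both arguments rest on the same external inputs stated before the Proposition: Kostant's and Garfinkle's description of the generators of $J_0$ and $\mc{J}_0$, and $\gr\mc{J}_0=J_0$.
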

  \begin{proof}
By the proof of \cite[Theorem 3.1]{GanSav04},
$\mc{J}_W$ contains $(\Omega-c_0)\fing$.
Hence it contains $(\Omega-c_0)\Omega$.
Since $c_0\ne 0$,
we have  an  isomorphism of algebras
\begin{align*}
 U(\fing)/\mc{J}_W\isomap U(\fing)/\bra \mc{J}_W,\Omega\ket
 \times
U(\fing)/\bra \mc{J}_W,\Omega-c_0\ket.
\end{align*}
As we have explained above,
 $\bra \mc{J}_W,\Omega-c_0\ket=\mc{J}_0$.
Also,
since
$\mc{J}_W$ contains $(\Omega-c_0)\fing$,
$\bra \mc{J}_W,\Omega\ket$ contains $\fing$.
Therefore
 $U(\fing)/\bra \mc{J}_W,\Omega\ket=\C$ as required.
  \end{proof}

\section{A lifting of Joseph ideals} \label{sec:Joseph}
For a $\Z_{\geq 0}$-graded vertex algebra $V=\bigoplus_{d}V_d$, 
let $A(V)$ be Zhu's algebra of $V$:
\begin{align*}
 A(V)=V/V\circ V,
\end{align*}
where $V\circ V$ is the $\C$-span of the vectors
\begin{align*}
a\circ b:=\sum_{i\geq 0}\begin{pmatrix}
			 \Delta\\ i
			\end{pmatrix}a_{(i-2)}b
\end{align*}for $a\in V_{\Delta}$, $\Delta\in \Z_{\geq 0}$, $b \in V$,
and
$V\ra (\End V)[[z,z^{-1}]]$,  $a\mapsto \sum_{n\in \Z}a_{(n)}z^{-n-1}$,
denotes the state-field correspondence.
The space $A(V)$ is a unital associative algebra
with respect to the  multiplication defined by
\begin{align*}
 a * b:=\sum_{i\geq 0}\begin{pmatrix}
			 \Delta\\ i
			\end{pmatrix}a_{(i-1)}b
\end{align*}
for $a\in V_{\Delta}$, $\Delta\in \Z_{\geq 0}$, $b \in V$.
More generally, for a $V$-module $M$, 
a bimodule $A(M)$ over $A(V)$ is defined similarly
(\cite{FreZhu92}).

Zhu's algebra $A(V)$ naturally acts on the top degree component $M_{top}$ of a
$\Z_{\geq 0}$-graded
$V$-module $M$,
and
$M\mapsto M_{top}$ gives  \cite{Zhu96}
a one-to-one correspondence between simple graded
  $V$-modules
and simple $A(V)$-modules.

The vertex algebra $V$ is called a {\em chiralization} of an algebra $A$ if
$A(V)\cong A$.

For instance, 
consider the universal affine vertex algebra $V^k(\fing)$.
A $V^k(\fing)$-module is the same as a smooth $\affg'$-module of level
$k$,
where $\affg'=[\affg,\affg]=\fing[t,t^{-1}]\+ \C K$.
Zhu's algebra
 $A(V^k(\fing))$ is naturally isomorphic to $U(\fing)$ (\cite{FreZhu92},
 see also \cite[Lemma 2.3]{A12-2}),
and hence, 
$V^k(\fing)$ is a chiralization of $U(\fing)$.
The top degree component 
of the irreducible highest weight representation $L(\lam)$ of $\affg$
with highest weight $\lam$ is $L_{\fing}(\bar \lam)$,
where $\bar \lam $ is the 
restriction of $\lam$ to the
  Cartan subalgebra of $\fing$.

Let $\widehat{\mc{J}}_k
$ be the unique maximal ideal of $V^k(\fing)$, so that 
\begin{align*}
 V_k(\fing)=V^k(\fing)/\widehat{\mc{J}}_k.
\end{align*}
We have the exact sequence
$A(\widehat{\mc{J}}_k)\ra U(\fing)\ra A(V_k(\fing))\ra 0$ since the functor $A(?)$ is
right exact and thus
$A(V_k(\fing))$ is the quotient of $U(\fing)$ by the the image
$\mc{I}_k$
of $A(\widehat{\mc{J}}_k)$ in $U(\fing)$:
\begin{align*}
 A(V_k(\fing))=U(\fing)/\mc{I}_k.
\end{align*}

One may ask whether 
 $\mc{I}_k$
coincides with the Joseph ideal $\mc{J}_0$ for some $k\in \C$,
so that
$V_k(\fing)$ is a chiralization of $U(\fing)/\mc{J}_0$.
But  this can never happen.
Indeed,
$U(\fing)/\mc{J}_0$ does not admit  finite dimensional representations
while
$\C$  is always an $A(V_k(\fing))$-module  as $V_k(\fing)$ is a module over
 itself
and $V_k(\fing)_{top}=\C$.
However, by Proposition~\ref{Pro:enlarged-Joseph},
it makes sense to ask the same question for the ideal $\mc{J}_W$.


 \begin{Th} \label{Th:lifting-of-Joseph-ideal}
Assume that $\fing$ belongs to the Deligne 
exceptional series outside the type $A$
and that 
$k=-h^{\vee}/6-1$. 
Then $V_k(\fing)$ is a chiralization of 
$U(\fing)/\mc{J_W}$, that is,
\begin{align*}
A(V_k(\fing))\cong U(\fing)/\mc{J_W} \cong \C\times U(\fing)/\mc{J}_0.
\end{align*}
In particular, since $\mc{J}_0$ is maximal,
      the irreducible highest weight representation $L(\lam)$ of
 $\affg$ is a $V_k(\fing)$-module if and only if 
\begin{align*}
\bar \lam=0\quad\text{or}\quad
\on{Ann}_{U(\fing)}L_{\fing}(\bar \lam)=\mc{J}_0.
\end{align*} \end{Th}

According to \cite[4.3]{Jos98}, the weights $\mu$ 
such that ${\rm Ann}_{U(\fing)}L_{\fing}(\mu) = \mc{J}_0$ 
are 
\begin{align*}
 w\circ (\lam_0-\rho) := w(\lam_0)-\rho,\quad w\in {\rm W}_{0},
\end{align*}
where  
the weight $\lam_0$ and the subset ${\rm W}_{0}$ of the Weyl group 
${\rm W}$ of $\fing$
are described in Table \ref{tab:weighs}. 
Here
we adopt the standard 
Bourbaki numbering for the simple roots 
$\{\alpha_1,\ldots,\alpha_1\}$ of $\fing$, 
and we denote by $\varpi_1,\ldots,\varpi_l$ the 
corresponding fundamental weights. 


 {\begin{table}[h]\small
 \begin{center}
\begin{tabular}{|c|c|c|c|}
\hline  & $-\frac{h^{\vee}}{6}-1$ &$\lam_0$ & ${\rm W}_{0}$\\[0.25em]
\hline && & \\[-1em]
$G_2$ & $-\frac{5}{3}$ & $\varpi_1+\frac{1}{3}\varpi_2$ & $\{1,s_2\}$ \\[0.25em]
\hline && & \\[-1em]
$D_4$ & $-2$ & $\varpi_1+\varpi_3+\varpi_4$ &$\{1, s_1, s_3, s_4\}$\\[0.25em]
\hline && & \\[-1em]
$F_4$ & $-\frac{5}{2}$ &
	 $\frac{1}{2}\varpi_1+\frac{1}{2}\varpi_2+\varpi_3+\varpi_4$
& $\{1,s_1,s_2\}$
\\[0.25em]
\hline && & \\[-1em]
$E_6$ & $-3$ & $\varpi_1+\varpi_2+\varpi_3+\varpi_5+\varpi_6$
&$\{1,s_2,s_3,s_1s_3,s_5,s_6s_5\}$\\[0.25em]
\hline && & \\[-1em]
$E_7$ & $-4$ & $\varpi_1+\varpi_2+\varpi_3+\varpi_5+\varpi_6+\varpi_7$
&$\{1,s_2,s_3,s_1s_3,s_5,s_6s_5,s_7s_6s_5\}$ \\[0.25em]
\hline && & \\[-1em]
$E_8$ & $-6$ & $\varpi_1+\varpi_2+\varpi_3+\varpi_5+\varpi_6+\varpi_7+\varpi_8$
&$\{1,s_2,s_3,s_1s_3,s_5,s_6s_5,s_7s_6s_5,s_8s_7s_6s_5\}$\\[0.25em]
\hline 
\end{tabular}
 \end{center}
\vspace{.25cm}
\caption{$-h^{\vee}/6-1$, $\lam_0$ and ${\rm W}_{0}$} \label{tab:weighs}
 \end{table}}
Note that the last statement of 
 Theorem \ref{Th:lifting-of-Joseph-ideal}
reproves the earlier results
 \cite[Proposition 3.6 (1)]{AxtLee11} for type $G_2$,
 \cite[Theorem 4.3]{Per13} for type $D_4$
 and \cite[Theorem 6.4]{Per07typeF} for type $F_4$.

For 
types $G_2$ and $F_4$, the level $k=-h^{\vee}/6-1$ is {\em admissible},
that is,
$k\Lam_0$ is an admissible weight \cite{KacWak89} for $\affg$.
Using  \cite[Proposition 3.3]{A12-2}
one finds that 
\begin{align*}
\{k\Lam_0, w\circ (\lam_0-\rho)+k\Lam_0\mid w\in {\rm W}_{0}\}
\end{align*}
is exactly the set of admissible weights of level $k$
whose integral Weyl group is isomorphic to that of $k\Lam_0$,
which  agrees with    \cite[Main Theorem]{A12-2}.

Theorem \ref{Th:lifting-of-Joseph-ideal} will be proven  
at the end of Section \ref{sec:sing_vectors}. 
 
\section{Singular vectors of affine vertex algebra of degree $2$} \label{sec:sing_vectors}
By the PBW theorem, we have
$V^k(\fing)\cong U(\fing[t^{-1}]t^{-1})$  as $\C$-vector spaces.
Below we often identify $V^k(\fing)$ with $U(\fing[t^{-1}]t^{-1})$.

The vertex algebra $V^k(\fing)$ is naturally graded:
\begin{align*}
V^k(\fing)
 =\bigoplus_{d\in \Z_{\geq 0}}V^k(\fing)_d,\quad
V^k(\fing)_d=\{v \in V^k(\fing)\mid Dv=-d v\}.
\end{align*}
Note that each homogeneous component $V^k(\fing)_d$ is a
finite-dimensional $\fing$-submodule of $V^k(\fing)$.
 \begin{Lem}We have a $\fing$-module embedding
\begin{align*}
 \sigma_d :S^d(\fing)\hookrightarrow V^k(\fing)_d, 
\quad x_1\dots x_d\mapsto \frac{1}{d!}\sum_{\sigma\in
 \mathfrak{S}_d}x_{\sigma(1)}(-1)\dots x_{\sigma(d)}(-1).
\end{align*}
 \end{Lem}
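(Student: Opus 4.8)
The plan is to realize $\sigma_d$ as the restriction of the Poincar\'e--Birkhoff--Witt symmetrization isomorphism. Under the identification $V^k(\fing)\cong U(\fing[t^{-1}]t^{-1})$, with the vacuum vector corresponding to $1$, the mode $x(-1)$ acts by left multiplication, and the linear map $x\mapsto x(-1)$ identifies $\fing$ with the degree-one subspace $\fing\otimes t^{-1}$ of $\mf{a}:=\fing[t^{-1}]t^{-1}$. With these identifications, $\sigma_d$ is exactly the degree-$d$ part of the symmetrization map $S(\mf{a})\to U(\mf{a})$, $u_1\cdots u_d\mapsto \frac{1}{d!}\sum_{\sigma\in\mathfrak{S}_d}u_{\sigma(1)}\cdots u_{\sigma(d)}$, restricted to $S^d(\fing\otimes t^{-1})\subset S^d(\mf{a})$.

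First I would check that $\sigma_d$ is well defined with the stated target. The symmetrization over $\mathfrak{S}_d$ makes the expression invariant under permuting the $x_i$, so it factors through $S^d(\fing)$. Each mode $x(-1)$ lowers the $D$-eigenvalue by one, since $[D,x(-1)]=-x(-1)$ and $D$ annihilates the vacuum; hence every monomial $x_{i_1}(-1)\cdots x_{i_d}(-1)$ lies in $V^k(\fing)_d$, giving a map $\sigma_d\colon S^d(\fing)\to V^k(\fing)_d$. For $\fing$-equivariance, recall that $\fing$ acts on $V^k(\fing)$ through the zero modes $y\mapsto y(0)$; the relation $[y(0),x(-1)]=[y,x](-1)$ together with $y(0)\cdot 1=0$ shows that $y(0)$ acts on the span of the $x(-1)$'s exactly as the adjoint action extended as a derivation of $S^d(\fing)$. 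Thus $\sigma_d$ intertwines the two $\fing$-actions.

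The substantive point is injectivity, and here I would invoke PBW directly. The symmetrization map $S(\mf{a})\isomap U(\mf{a})$ is a linear isomorphism for any Lie algebra $\mf{a}$; in particular it is injective, and so is its restriction to the subspace $S^d(\fing\otimes t^{-1})$, which is precisely $\sigma_d$. Equivalently, passing to the associated graded of $U(\mf{a})$ with respect to the PBW filtration, the image of a symmetrized product of $d$ elements of $\fing\otimes t^{-1}$ is their commutative product in $\gr U(\mf{a})\cong S(\mf{a})$, and the monomials $x_{i_1}\cdots x_{i_d}$ with $x_{i_j}\in\fing\otimes t^{-1}$ are linearly independent there. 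I expect this injectivity to be the only nonformal ingredient; everything else follows immediately from the commutation relations and the grading.
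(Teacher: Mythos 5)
Your proof is correct, and it is precisely the argument the paper leaves implicit: the Lemma is stated without proof, immediately after the identification $V^k(\fing)\cong U(\fing[t^{-1}]t^{-1})$ via PBW, so the intended justification is exactly your realization of $\sigma_d$ as the restriction of the PBW symmetrization isomorphism $S(\mf{a})\isomap U(\mf{a})$ to $S^d(\fing\otimes t^{-1})$, with equivariance and the grading checked from the commutation relations as you do.
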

Let $v$ be a singular vector in $S^d(\fing)$.
Then
$\sigma_d(v)$  is a singular vector of $V^k(\fing)$ if and only if 
$f_{\theta}(1)\sigma_d(v)=0$. 
For $d=2$, we will simply denote by $\sigma$ the embedding $\sigma_d$. 

Let $W=\bigoplus_{i}W_i$ be the decomposition of $W$
into irreducible submodules,
and let $w_i$ be a highest weight vector of $W_i$.

 \begin{Th}\label{Th2}
\begin{enumerate}

\item Assume that $\fing$ belongs to the Deligne exceptional series 
outside the  type $A$. 

\begin{enumerate}

\item For any $i$, $\sigma(w_i)$ is a singular vector of $V^k(\fing)$ if and only if
	 	\begin{align*}
k=-h^{\vee}/6-1.
		\end{align*} 
\item  
	 Assume that $\fing$ is not of type $G_2$. 
For each $n\in \Z_{\geq 0}$ and each $i$,
  $\sigma(w_i)^{n+1} $ is a singular vector of
	 $V^k(\fing)$ if and only if
	 	\begin{align*}
k=n-h^{\vee}/6-1.
		\end{align*}
\end{enumerate}


\item Let $\fing$ be of type $B_l$, $l\geq 3$, so that
	$W=W_1\+ W_2$ where
	$W_1\cong L_{\fing}(\theta+\theta_1)=L_{\fing}(2\varpi_1)$ and $W_2\cong 
	L_{\fing}(\theta+\theta_2)=L_{\fing}(\varpi_4)$ if $l \geq 5$ 
	(and $W_2\cong 
	L_{\fing}(\theta+\theta_2)=L_{\fing}(2\varpi_l)$ if $l=3,4$).
\begin{enumerate}
 \item{\rm (\cite{Per07})}
For each $n\in \Z_{\geq 0}$,
 $\sigma(w_1)^{n+1}$ is a singular vector of
	$V^k(\fing)$ if and only if 
\begin{align*}
k=n-l+3/2.
\end{align*}
\item
For each $n\in \Z_{\geq 0}$,
	$\sigma(w_2)^{n+1}$ is a
	singular vector of
	$V^k(\fing)$ if and only if 
\begin{align*}
k=n-2.
\end{align*}

\end{enumerate}

\item {\rm (\cite{Ada94})}
Let $\fing$ be of type $C_l$, $l\geq 2$,
so that $W=W_1 \oplus W_2$ where $W_1 \cong L_{\fing}(\theta+\theta_1)=L_{\fing}(2\varpi_2)$ 
and $W_2 \cong L_{\fing}(\frac{1}{2}\theta+\theta_1)=L_{\fing}(\varpi_2)$.
For each $n\in \Z_{\geq 0}$,
$\sigma(w_1)^{n+1}$ is a singular vector of $V^k(\fing)$ 
if and only if 
\begin{align*}
 k=n-1/2.
\end{align*}

  \item Let $\fing$ of type $D_l$, $l\geq 5$, so that
	$W=W_1\+ W_2$ where
	$W_1\cong L_{\fing}(\theta+\theta_1)=L_{\fing}(2\varpi_1)$ and 
	$W_2\cong L_{\fing}(\theta+\theta_2)=L_{\fing}(\varpi_4)$
	if $l\geq 6$ 
	(and $W_2\cong L_{\fing}(\theta+\theta_2)=L_{\fing}(\varpi_4+\varpi_5)$ if $l =5$). 
\begin{enumerate}
 \item {\rm (\cite{Per13})}
For each $n\in \Z_{\geq 0}$,
 $\sigma(w_1)^{n+1}$ is a singular vector of
	$V^k(\fing)$ if and only if 
\begin{align*}
k=n-l+2.
\end{align*}
\item  For each $n\in \Z_{\geq 0}$,
	$\sigma(w_2)^{n+1}$ is a
	singular vector of
	$V^k(\fing)$ if and only if 
\begin{align*}
k=n-2.
\end{align*}
\end{enumerate}
      
\end{enumerate}
\end{Th}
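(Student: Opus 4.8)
The plan is to collapse the entire statement onto a single annihilation condition and then compute it. Recall that the affine nilpotent radical $\widehat{\mf{n}}_+$ is generated as a Lie algebra by the Chevalley generators $e_1,\dots,e_l$ of $\fing$ together with $e_0=f_\theta(1)$; hence a $\fing$-highest weight vector $v\in V^k(\fing)$ is a singular vector precisely when $f_\theta(1)v=0$, which is exactly the reduction recorded after the embedding lemma. So I would first check that each candidate $\sigma(w_i)^{n+1}$ is a $\fing$-highest weight vector: since any $x(0)$ with $x\in\fing$ acts as a derivation of all products of states, and $\sigma(w_i)$ is $\fing$-highest weight, the normally ordered power $\sigma(w_i)^{n+1}$ is annihilated by $e_\alpha(0)$ for every positive root $\alpha$ and has weight $(n+1)(\theta+\theta_i)$. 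In every case the problem thus becomes: compute $f_\theta(1)\sigma(w_i)^{n+1}$ and determine the locus of $k$ on which it vanishes.

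For the base case $n=0$ I would write each $w_i$ explicitly from Garfinkle's description of $W$ recalled above, as a weight-$(\theta+\theta_i)$ vector of $S^2(\fing)$ of leading form $e_\theta e_{\theta_i}$ plus lower terms. Because $f_\theta(1)$ drops the conformal degree by one and the weight by $\theta$, and because $\theta_i$ is a root with $(\theta_i|\theta)=0$, the vector $f_\theta(1)\sigma(w_i)$ lands in the one-dimensional space $(V^k(\fing)_1)_{\theta_i}=\C\,e_{\theta_i}(-1)$, so $f_\theta(1)\sigma(w_i)=c_i(k)\,e_{\theta_i}(-1)$. Using only $[f_\theta(1),x(-1)]=[f_\theta,x](0)+(f_\theta|x)K$ and $y(0)|0\rangle=0$, the scalar $c_i(k)$ is affine-linear in $k$: the $K$-terms supply the coefficient of $k$ and the $(0)$-mode terms supply a $k$-independent constant expressed through the structure constants and the inner product, hence through $h^{\vee}$. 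Solving $c_i(k)=0$ gives the base levels $-h^{\vee}/6-1$ in the Deligne case and $-l+3/2$, $-2$, $-1/2$, $-l+2$ in the remaining types, which already yields part (1)(a).

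For the higher powers I would realise $f_\theta(1)$ as the derivation $\ad(f_\theta(1))$ on $U(\fing[t^{-1}]t^{-1})\cong V^k(\fing)$ (legitimate as $f_\theta(1)|0\rangle=0$), set $S_i:=\sigma(w_i)$, and expand
\begin{align*}
f_\theta(1)\,\sigma(w_i)^{n+1}=\sum_{j=0}^{n}\sigma(w_i)^{\,j}\,\bigl[f_\theta(1),S_i\bigr]\,\sigma(w_i)^{\,n-j}\,|0\rangle.
\end{align*}
Here $\bigl[f_\theta(1),S_i\bigr]$ splits into a $k$-linear degree-one part and a part carrying a horizontal $(0)$-mode; the latter is pushed to the right using that each $\sigma(w_i)^{m}$ is $\fing$-highest weight, so $y(0)$ acts through the finite-dimensional $\fing$-module structure only. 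The expectation, to be verified type by type, is that after reordering every summand collapses onto the single vector $e_{\theta_i}(-1)\sigma(w_i)^{n}$, giving
\begin{align*}
f_\theta(1)\,\sigma(w_i)^{n+1}=c_i(k,n)\,e_{\theta_i}(-1)\sigma(w_i)^{n},
\end{align*}
with $c_i(k,n)$ affine-linear in $k$ whose root is shifted by exactly $n$ from the root of $c_i(k,0)$. Reading it off produces $k=n-h^{\vee}/6-1$ and the remaining type-specific conditions.

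The main obstacle is exactly this collapse: a priori the reordering contributes along several basis vectors of the weight space with coefficients polynomial in $k$ and $n$, and one must prove that everything but the $e_{\theta_i}(-1)\sigma(w_i)^{n}$ component cancels and that this component is nonzero, so that the condition is a genuine equivalence. Controlling these cross-terms is where the explicit shape of $W$, the orthogonality $(\theta_i|\theta)=0$, and the Deligne-uniform numerics are used; it is also where type $G_2$ differs, the collapse failing for $n\geq 1$, which is why part (1)(b) excludes $G_2$. For nonvanishing of the target I would invoke that $V^k(\fing)=V_k(\fing)$ is simple for generic $k$, so $f_\theta(1)\sigma(w_i)^{n+1}\neq 0$ there (equivalently, $f_\theta(1)^{\dagger}=e_\theta(-1)$ for the contravariant form keeps $c_i(k,n)\not\equiv 0$); together with the computed root this pins the vanishing locus to the single claimed level. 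Finally, parts (2)(a), (3) and (4)(a) match the computations of Per\v{s}e and Adamovi\'c cited in the statement, so for those I would either cite those references or run the same induction with the type-$B_l$, $C_l$, $D_l$ data, and parts (2)(b), (4)(b) follow from the identical scheme applied to the second summand $W_2$.
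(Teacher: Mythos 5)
Your framework coincides with the paper's: the reduction of singularity to the single condition $f_\theta(1)\sigma(w_i)^{n+1}=0$, the observation that $f_\theta(1)\sigma(w_i)$ must land in the one-dimensional weight space $\C\,e_{\theta_i}(-1)\subset V^k(\fing)_1$, and the expansion of $f_\theta(1)$ as a derivation over the product are exactly how the paper proceeds. But your write-up stops where the content of the theorem begins. You never write down $w_i$, so the affine-linear function $c_i(k)$ is never computed, and the specific levels $-h^\vee/6-1$, $n-l+3/2$, $n-2$, $n-1/2$, $n-l+2$ are asserted rather than derived. In the paper this is done via Garfinkle's explicit construction: $w_1=e_\theta e_{\theta_1}-\sum_j e_{\beta_j+\theta_1}e_{\delta_j+\theta_1}$, where $(\beta_j,\delta_j)$ runs over the pairs of positive roots with $\beta_j+\delta_j=\theta-\theta_1$, and the decisive numerical input, checked case by case in the paper's table, is that the number of such pairs equals $h^\vee/6+1$. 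That count is what produces the constant term of $c_i(k)$ and hence the claimed root. Your generic-simplicity argument does show $c_i(k,n)\not\equiv 0$, so the vanishing locus is a single point, but no argument you give can identify which point it is.

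Second, you explicitly flag the collapse of $f_\theta(1)\sigma(w_i)^{n+1}$ onto $\C\,e_{\theta_i}(-1)\sigma(w_i)^{n}$ as ``the main obstacle \dots to be verified type by type,'' and then leave it unverified; the shift of the root by exactly $n$ is likewise only conjectured. This is precisely what the paper proves, using three concrete facts about Garfinkle's vector: (i) $[e_{\theta_1}(-1),\sigma(w_1)]=0$ because $\beta_j+2\theta_1$ and $\delta_j+2\theta_1$ are not roots; (ii) $[f_\theta,e_{\beta_j+\theta_1}](0)$ and $[f_\theta,e_{\delta_j+\theta_1}](0)$ annihilate $\sigma(w_1)$ because the roots $-\theta+\beta_j+\theta_1$, $-\theta+\delta_j+\theta_1$ are perpendicular to the highest weight $\theta+\theta_1$; and (iii) $[f_\theta,e_\theta](0)=-h_\theta(0)$ acts on $\sigma(w_1)^{j}e_{\theta_1}(-1)$ by $-2j$, which summed over $j=0,\dots,n$ gives the factor $(n+1)(k+h^\vee/6+1-n)$ and hence the shift by $n$. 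There is also a case split your uniform scheme ignores: for $E_8$ the depth-one construction does not apply directly, and the paper routes the argument through a subalgebra $\tilde{\fing}$ of type $D_8$ containing $e_{\theta_1}$ and with the same highest root $\theta$; similarly, your guess about why $G_2$ is excluded from (1)(b) is plausible but unsubstantiated (the paper simply cites Axtell--Lee for $G_2$ and Per\v{s}e for $F_4$). In sum, the proposal is a correct outline of the paper's strategy, but the computations that constitute the proof are missing.
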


Note that (1) for $D_4$ is also a particular case of \cite{Per13}, 
that (1) (a) for $G_2$ was  proved in \cite{AxtLee11}, 
and that (1) for $F_4$ was proved in \cite{Per07typeF}. 

\begin{proof}
(1)  Assume that $\fing$ is of type  $D_4$, $E_6$, $E_7$, $E_8$. 
Then it is enough to prove (b). 

For $E_6$, $E_7$, $E_8$, $W=W_1$.  
For $D_4$, $W=W_1\oplus W_2 \oplus W_3$. 
Using the Dynkin automorphism, we can assume that $i=1$, and that 
$W_1=L_{\fing}(2\varpi_1)$. 

 For types $E_6$ and $E_7$, 
$\fing$ is of {\em depth one}, \cite[Chapter IV, Definition 1]{Gar}, 
and $(\theta-\theta_1)/2$ is not a root. 

Then we apply \cite[Chapter IV, Proposition 11]{Gar} to construct a singular vector 
$w_1$ for $W_1$. Table \ref{tab:pairs} describes the pairs of positives 
roots $(\beta_j,\delta_j)$ such that 
$$\beta_j +\delta_j= \theta - \theta_1.$$ 
The number of such pairs turns out to be 
equal to $h^\vee/6+1$. 
In this table, a positive root $\gamma$ 
is represented by $(k_1,\ldots,k_l)$ if $\gamma=\sum_{j=1}^l k_j \alpha_j$. 

Choose a Chevalley basis 
$\{h_i\}_i\cup \{e_\alpha, \, f_\alpha\}_{\alpha}$ of $\fing$ 
so that the conditions of \cite[Chapter IV, Definition 6]{Gar} 
are fulfilled, that is 
\begin{eqnarray} \label{eq:brack}
&& \forall \, j, \quad [e_{\delta_j},[e_{\beta_j},e_{\theta_1}]] = e_{\theta}, \quad 
[e_{\beta_j},e_{\theta_1}] = e_{\beta_j+\theta_1}, \quad 
[e_{\delta_j},e_{\theta_1}] = e_{\delta_j+\theta_1}.
\end{eqnarray}
Then set 
$$w_1:=e_{\theta}e_{\theta_1} - \sum_{k=1}^{\frac{h^\vee}{6}+1} 
e_{\beta_j+\theta_1} e_{\delta_j +\theta_1},$$ 
so that 
\begin{eqnarray*}
\sigma(w_1)&= &\frac{1}{2}(e_{\theta}(-1)e_{\theta_1}(-1)+e_{\theta_1}(-1)e_{\theta}(-1) \\ 
&& \quad - \sum_{k=1}^{\frac{h^\vee}{6}+1} (e_{\beta_j+\theta_1} (-1) e_{\delta_j +\theta_1}(-1) 
+ e_{\delta_j +\theta_1}(-1)  e_{\beta_j+\theta_1} (-1) )). 
\end{eqnarray*}
We observe using the relations (\ref{eq:brack}) that for each $j$, 
\begin{eqnarray} \label{eq2:brack}
 [[f_\theta,e_{\beta_j+\theta_1} ],e_{\delta_j+\theta_1}]
=  [[f_\theta, e_{\delta_j+\theta_1}] ,e_{\beta_j+\theta_1}] = -e_{\theta_1}.
\end{eqnarray}
By (\ref{eq2:brack}), we get:
\begin{eqnarray*}
f_{\theta}(1).\sigma(w_1) &=&  ([f_\theta,e_\theta](0) + k + \frac{h^\vee}{6}+1)e_{\theta_1}(-1) \\
&& -  \sum_{k=1}^{\frac{h^\vee}{6}+1} (e_{\beta_j+\theta_1} (-1) [f_\theta,e_{\delta_j +\theta_1} ](0) 
+ e_{\delta_j +\theta_1}(-1)  [f_\theta,e_{\beta_j+\theta_1}] (0) ).
\end{eqnarray*}
Observe that 
$$[f_\theta,e_\theta](0).\sigma(w_1) = - 2 \sigma(w_1)$$ 
since 
$\langle \theta+\theta_1, \theta^\vee \rangle= \langle \theta, \theta^\vee \rangle= 2,$ 
and that 
$$ [f_\theta,e_{\delta_j +\theta_1} ](0).\sigma(w_1) 
=  [f_\theta,e_{\beta_j+\theta_1}] (0). \sigma(w_1) =0$$
since $-\theta+\delta_j +\theta_1$, $-\theta+\beta_j +\theta_1$ are perpendicular to 
$\theta+\theta_1$, the weight of $\sigma(w_1)$, for each $j$. 
In addition, since $\beta_j+2\theta_1$, $\delta_j+2\theta_1$ are not roots, 
$[e_{\theta_1}(-1),\sigma(w_1)]=0$. 
So, for any $n \in \Z_{\geq 0}$ we get, 
\begin{align*} 
&f_\theta(1).\sigma(w_1)^{n+1} \\
&= 
\sigma(w_1)^n  (k + \frac{h^\vee}{6}+1) e_{\theta_1}(-1) 
 + \sum_{j=1}^{n} (
\sigma(w_1)^{n-j}  ([f_\theta,e_\theta](0) + k + \frac{h^\vee}{6}+1) . \sigma(w_1)^{j} e_{\theta_1}(-1))
\\
&=\sum_{j=0}^{n} (- 2j + k + \frac{h^\vee}{6}+1 ) \sigma(w_1)^{n} e_{\theta_1}(-1) 
= (n+1) (- n  + k + \frac{h^\vee}{6}+1) \sigma(w_1)^ne_{\theta_1}(-1).
\end{align*}
Hence $\sigma(w_1)^{n+1}$ is a singular vector of $V^k(\fing)$ 
for $k= n -h^\vee/6 -1$. 

 Assume that $\fing$ has type $E_8$. 
Then $\fing$ is not of depth one 
and we follow the construction of \cite[Chapter IV, \S 4]{Gar}. 
According to \cite[Chapter IV, \S 4]{Gar}, there is a positive root $\alpha$ 
such that the algebra $\tilde{\fing}$ generated by $e_{\alpha},e_{2}, 
\ldots,e_{8},f_{\alpha},f_2,\ldots,f_8$ has type $D_8$, 
where $e_i,f_i,i=1,\ldots,8$ are the generators of a Chevalley basis of $\fing$ 
corresponding to the simple roots $\alpha_1,\ldots,\alpha_8$ in the Bourbaki numbering.  
Moreover, we have that $\alpha=\theta_1$. 
Then we apply the construction of \cite[Chapter IV, \S 1]{Gar} 
to the algebra $\tilde{\fing}$ which is of depth one. 
One can choose our Chevalley basis 
$\{h_i\}_i\cup \{e_\alpha, \, f_\alpha\}_{\alpha}$ of $\fing$ 
so that the conditions of \cite[Chapter IV, Definition 6]{Gar} 
are fulfilled for $\tilde{\fing}$. 
Note that the highest root of $\tilde{\fing}$ is $\theta$, that is, 
the same as for $\fing$. 

Then we apply as in cases $E_6,E_7$ the construction of 
\cite[Chapter IV, Proposition 11]{Gar}. 
Table \ref{tab:pairs} describes the pairs of positives 
roots $(\beta_j,\delta_j)$ such that 
$$\beta_j +\delta_j= \theta - \theta_1.$$ 
The number of such pairs is $h^\vee/6+1$ too. 

Then we set 
$$w_1:=e_{\theta}e_{\theta_1} - \sum_{k=1}^{\frac{h^\vee}{6}+1} 
e_{\beta_j+\theta_1} e_{\delta_j +\theta_1}.$$
We verify as for the types $E_6,E_7$ that 
$\sigma(w_1)^{n+1}$ is a singular vector of $V^k(\fing)$ 
for $k= n -h^\vee/6 -1$. 
{\begin{table}[h] \tiny
\begin{center}
\begin{tabular}{|c|c|c|c|c|}
\hline &&& & \\[-0.5em] 
Type & $D_4$ & $E_6$  & $E_7$ & $E_8$  \\[0.25em] 
\hline &&& & \\[-0.5em] 
${h^\vee}/{6}+1$ & 2 & 3 & 4 & 6 \\[0.25em] 
\hline &&& & \\[-0.25em] 
$\theta$ & $(1211)$ & $(122321)$ & $(2234321)$ & $(23465432)$  \\[0.25em] 
\hline &&& & \\[-0.25em] 
$\theta_1$ & $(1000)$ & $(101111)$ & $(0112221)$ 
& $(22343210)$  \\[0.25em] 
\hline &&& & \\[-0.25em] 
$(\beta_j,\delta_j)$, 
& $ (0100) , (0111) $ & 
$(010 000),(011 210)$  &  $(100 000 0),(112 210 0)$ & $(000 000 01),(011 222 21)$ \\[0.25em]
$\beta_j+\delta_j=\theta- \theta_1$ & $(0101),(0110)$ &  
$(010 100),(011 110)$  &  $(101 000 0),(111 210 0)$ & $(000 000 11),(011 222 11)$ \\[0.25em] 
 & & 
$(010 110),(010 100)$ &  $(101 100 0),(111 110 0)$ & $(000 001 11),(011 221 11)$  \\[0.25em]  
&&&$(101 110 0),(111 100 0)$  & $(000 011 11),(011 211 11)$ \\[0.25em] 
&&& &  $(000 111 11),(011 111 11)$  \\[0.25em] 
&&&& $(010 111 11),(001 111 11)$  \\[0.25em] 
\hline
\end{tabular}
\vspace{.25cm}
\caption{Data for $D_4$, $E_6$, $E_7$, $E_8$
} \label{tab:pairs}
\end{center}
\end{table}}

\smallskip

(2) (b) and (4) (b) 
Assume that $\fing$ is of type $B_l$, $l\geq 3$, or of type $D_l$, $l\geq 5$. 
Then in both cases, $\theta_2$ is the highest root of the root system generated 
by $\alpha_3,\ldots,\alpha_l$, $(\theta-\theta_2)/2$ is not a root and 
there are precisely two pairs $(\beta_j,\delta_j)$ 
such that $\beta_j +\delta_j=\theta-\theta_2$. 
Namely, these pairs are:
$$(\beta_1,\delta_1)=(\alpha_2,\alpha_1+\alpha_2+\alpha_3) 
\quad \text{ and }\quad (\beta_2,\delta_2)=(\alpha_2+\alpha_3 ,\alpha_1+\alpha_2).$$
According to \cite[Chapter IV,Proposition 11]{Gar}, 
$$w_2:=e_{\theta}e_{\theta_2} - \sum_{k=1}^{2} 
e_{\beta_j+\theta_2} e_{\delta_j +\theta_2}$$ 
is a singular vector for $\fing$. 
Moreover, all bracket relations $(\ref{eq:brack})$ and $(\ref{eq2:brack})$ 
hold as in case~(1)\footnote{For $B_3$, a factor $2$ appears in some brackets 
but this does not affect the final result.}, with $\theta_2$ in place of $\theta_1$. 
Hence we get, 
$$f_{\theta}(1).\sigma(w_2)^{n+1} = (-n+k+2)\sigma(w_2)^n e_{\theta_2}(-1).$$
The statement follows. 
\end{proof}

\begin{Rem} 
If $\fing$ is of type $C_l$, $l \geq 3$, 
we can construct a singular vector for $V^k(\fing)$ of weight $\frac{1}{2}(\theta+\theta_1)$ 
with $k=-(l+6)/2$ as follows. 

Set 
$$\theta_0 := (\theta+\theta_1)/2=\alpha_1+2(\alpha_2+\cdots+\alpha_{l-1}) +\alpha_l.$$
For $j \in \{2,\ldots,l\}$, set 
\begin{eqnarray*} 
\beta_j := \alpha_1 + \alpha_2 + \cdots + \alpha_{j-1}, && 
\delta_j :=  \alpha_2 + \cdots + \alpha_{j-1} +2(\alpha_j+ \cdots 
+\alpha_{l-1}) + \alpha_l. 
\end{eqnarray*}
For $j \in \{3,\ldots,l\}$, set 
\begin{eqnarray*} 
&&\beta'_j := \alpha_2 +  \cdots + \alpha_{j-1}, \qquad 
\delta'_j :=  \alpha_1 + \cdots + \alpha_{j-1} +2(\alpha_j+ \cdots 
+\alpha_{l-1}) + \alpha_l. 
\end{eqnarray*}
Then 
$$\forall \, j \in \{3,\ldots,l\},\quad \beta_j + \delta_j =\beta'_j + \delta'_j 
= \theta_0 = \frac{1}{2}(\theta + \theta_1)
\quad\text{ and }\quad \beta_2 +\delta_2 = \theta_0.$$
We can choose a Chevalley basis of $\fing$ such that the vector 
\begin{eqnarray*}   
v_2 &:= & 
e_\theta (-1) e_{ - \alpha_1}(-1)    -\frac{1}{2}   h_1 (-1) e_{\theta_0}(-1)
+ e_{\theta_0}(-2)   \\
&& \; - e_{\beta_2}(-1) e_{\delta_2}(-1) -\frac{1}{2} \sum_{j=3}^l (  e_{\beta_j}(-1) e_{\delta_j}(-1) 
- e_{\beta'_j}(-1) e_{\delta'_j}(-1))  
\end{eqnarray*} 
is singular for $V^k(\fing)$ with $k=-(l+6)/2$. 
The verifications are left to the reader. 
This remark will be not used in the sequel. 
\end{Rem}

 \begin{proof}[Proof of Theorem \ref{Th:lifting-of-Joseph-ideal}] 
Let $\fing$, $k$ be as in Theorem.
Then $\sigma(w_i)$ is a singular vector of $V^k(\fing)$ for all $i$ by
  Theorem \ref{Th2}.
Let
$N$ be the submodule of $V^k(\fing)$ generated by $\sigma(w_i)$ for all
  $i$,
and set $\tilde{V}_k(\fing)=V^k(\fing)/N$.
By construction the image of $A(N)$ in $U(\fing)$ is $\mc{J}_W$.
Hence
\begin{align*}
 A(\tilde{V}_k(\fing))=U(\fing)/\mc{J}_W.
\end{align*}
It remains to show that $\tilde{V}_k(\fing)=V_k(\fing)$,
that is, $\tilde{V}_k(\fing)$ is simple.
(In the case that $k$ is admissible, that is, if $\fing$ is of type 
$G_2$,  $F_4$, this follows from \cite{KacWak88}.
Also,
this
 has been proved in \cite{Per13}
in the case that $\fing$ is of type $D_4$.)
 
Suppose that $\tilde{V}_k(\fing)$ is not simple,
or equivalently,
$\tilde{V}_k(\fing)$ is reducible as a
  $\affg$-module.
Then there is at least one non-zero weight singular
vector, say, $v$.
Let $\mu$ be the weight of $v$,
and 
let $M$ be a submodule of $\tilde{V}_k(\fing)$ generated by $v$.
Since $M_{top}
=L_{\fing}(\bar \mu)$,
$L_{\fing}(\bar \mu)$ is a module over
  $A(\tilde{V}_k(\fing))=U(\fing)/\mc{J}_W
=\C\+U(\fing)/\mc{J}_0$.
On the other hand  $L_{\fing}(\bar \mu)$
 is finite-dimensional
since it is a submodule of
  $V^k(\fing)_d$ for some $d$.
This implies that  $L_{\fing}(\bar \mu)$ cannot be a
  $U(\fing)/\mc{J}_0$-module.
Therefore $\bar \mu=0$.
This implies that
$v$ coincides with the highest weight vector of 
$\tilde{V}_k(\fing)$ up to nonzero multiplication,
which is a 
contradiction.
 \end{proof}
 \section{Proof of Theorem \ref{Th1}}\label{sec:proof-of-th1}
\label{sec:proofs}
Let  $\fing$ be of type $D_l$, $l\geq 4$, $E_6$, $E_7$, or $E_8$.

For $n\in \Z_{\geq 0}$, set
\begin{align}
 k_n=\begin{cases}
n-h^{\vee}/6-1      &\text{if $\fing$ is of type $D_4$,  $E_6$,
      $E_7$, $E_8$},\\
n-2&\text{if $\fing$ is of type $D_l$, $l\geq 5$}.
     \end{cases}
\label{eq:value-of-k}
\end{align}
 Let $N$ be the submodule
 of
 $V^k(\fing)$ generated by $\sigma(w_i)^{n+1}$ for all $i$
 for type $D_4$,  $E_6$, $E_7$, $E_8$,
 and by $\sigma(w_1)^{n+l-3}$ and $\sigma(w_2)^{n+1}$
 for type $D_l$, $l\geq 5$, and let
 \begin{align*}
  \tilde V_{k_n}(\fing):=V^{k_n}(\fing)/N.
 \end{align*}
 \begin{Conj}\label{Conj:simplicity}
$\tilde{V}_{k_n}(\fing)=V_{k_n}(\fing)$, that is,
$\tilde{V}_{k_n}(\fing)$ is simple, if $k_n<0$.
 \end{Conj}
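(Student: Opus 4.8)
The plan is to adapt the proof of Theorem~\ref{Th:lifting-of-Joseph-ideal} from the case $n=0$ to arbitrary $n$, the new feature being that the defining relations now come from the higher singular vectors $\sigma(w_i)^{n+1}$ (resp.\ $\sigma(w_1)^{n+l-3}$ and $\sigma(w_2)^{n+1}$ in type $D_l$) rather than from $\sigma(w_i)$. We have the surjections of $\affg$-modules $V^{k_n}(\fing)\twoheadrightarrow \tilde{V}_{k_n}(\fing)\twoheadrightarrow V_{k_n}(\fing)$, the second one because every singular vector lies in the maximal submodule $\widehat{\mc{J}}_{k_n}$. Thus $\tilde{V}_{k_n}(\fing)=V_{k_n}(\fing)$ is equivalent to the statement that $\tilde{V}_{k_n}(\fing)$ carries no nonzero singular vector of weight $\mu$ with $\bar\mu\ne 0$. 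Arguing by contradiction, I would take such a vector $v$, let $M=U(\affg)v$ be the submodule it generates, and record that $M_{top}=L_{\fing}(\bar\mu)$ is a \emph{finite-dimensional} module over Zhu's algebra $A(\tilde{V}_{k_n}(\fing))=U(\fing)/\mc{I}_n$, where $\mc{I}_n$ denotes the image of $A(N)$ in $U(\fing)$.

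As a cheap first step I would pin down the associated variety. The image of the generator $\sigma(w_i)^{n+1}$ of the vertex ideal $N$ in $R_{V^{k_n}(\fing)}=\C[\fing^*]$ is $w_i^{n+1}$, so the image of $N$ is the Poisson ideal $P$ generated by the $w_i^{n+1}$. Since $\overline{\mathbb{O}_{min}}$ is a Poisson subvariety and $w_i\in J_0=I(\overline{\mathbb{O}_{min}})$ (Lemma~\ref{Lem:Gan-Savin}), we have $\langle w_i^{n+1}\rangle\subseteq P\subseteq J_0$, all three with zero locus $\overline{\mathbb{O}_{min}}$; hence $X_{\tilde{V}_{k_n}(\fing)}=\overline{\mathbb{O}_{min}}$, of positive dimension $2h^{\vee}-2$. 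I emphasize that Theorem~\ref{Th1} itself does \emph{not} require the full conjecture: the inclusion $X_{V_{k_n}(\fing)}\subseteq\overline{\mathbb{O}_{min}}$ holds since the singular vectors lie in $\widehat{\mc{J}}_{k_n}$, while $X_{V_{k_n}(\fing)}\ne\{0\}$ because $V_{k_n}(\fing)$ with $k_n<0$ is not lisse, and $\{0\}$, $\overline{\mathbb{O}_{min}}$ are the only $G$-invariant conic closed subvarieties of $\overline{\mathbb{O}_{min}}$. By the same token, the associated-variety bound alone does not forbid a finite-dimensional top module $L_{\fing}(\bar\mu)$, as $\{0\}\subseteq\overline{\mathbb{O}_{min}}$.

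The heart of the matter is therefore to exclude $L_{\fing}(\bar\mu)$ with $\bar\mu\ne 0$ as a finite-dimensional $U(\fing)/\mc{I}_n$-module. For $n=0$ this is exactly Proposition~\ref{Pro:enlarged-Joseph}: there $\mc{I}_0=\mc{J}_W$, the isomorphism $U(\fing)/\mc{J}_W\cong\C\times U(\fing)/\mc{J}_0$ splits off the trivial representation, and $U(\fing)/\mc{J}_0$ has no finite-dimensional module because $\mc{J}_0$ is maximal with $\gr\mc{J}_0=J_0$ of positive-dimensional zero locus. For general $n$ I would aim at the analogous splitting, namely to show that $\mc{I}_n$ contains $(\Omega-c_n)\fing$ for a suitable nonzero eigenvalue $c_n$ of the Casimir, which would yield a decomposition $U(\fing)/\mc{I}_n\cong A_{\mathrm{fin}}\times A_{\mathbb{O}}$ with $A_{\mathrm{fin}}$ finite-dimensional supporting only the trivial representation and $A_{\mathbb{O}}$ a Joseph-type quotient with no finite-dimensional modules, forcing $\bar\mu=0$. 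The hypothesis $k_n<0$ must enter decisively, since for $k_n\geq 0$ the level is a non-negative integer, $V_{k_n}(\fing)$ is integrable with $X_{V_{k_n}(\fing)}=\{0\}$ and $\tilde{V}_{k_n}(\fing)\ne V_{k_n}(\fing)$; any correct argument has to break at $k_n=0$.

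The step I expect to be the main obstacle is precisely the identification of $\mc{I}_n$ for $n\geq 1$. Unlike the case $n=0$, the Zhu image of the $(n+1)$-fold normally ordered product $\sigma(w_i)^{n+1}$ is a complicated element of $U(\fing)$ of Poincar\'e--Birkhoff--Witt degree $2(n+1)$ whose leading symbol is only $w_i^{n+1}$, and there is no a priori reason for $\mc{I}_n$ to be the clean, splitting ideal $\mc{J}_W$. Accordingly I would attack it from two sides: (i) compute the Zhu image of $\sigma(w_i)^{n+1}$ explicitly enough to exhibit a Casimir-splitting element $(\Omega-c_n)\fing$ inside $\mc{I}_n$; or, bypassing the Zhu algebra entirely, (ii) enumerate all singular vectors of $V^{k_n}(\fing)$ directly for $k_n<0$ via a Shapovalov/Kac--Kazhdan determinant analysis and show that each of them already lies in $N$. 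Either route requires genuinely new input beyond what settles the case $n=0$, which is exactly why the statement remains conjectural in general.
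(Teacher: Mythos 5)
The statement you were asked to prove is Conjecture \ref{Conj:simplicity}: the paper itself offers \emph{no} proof of it. The only case settled in the paper is $n=0$, in types $D_4$, $E_6$, $E_7$, $E_8$, inside the proof of Theorem \ref{Th:lifting-of-Joseph-ideal}, and the paper records exactly this right after stating the conjecture. Your proposal correctly recognizes this situation rather than manufacturing a complete argument, and your reconstruction of the $n=0$ mechanism is precisely the paper's: the image of $A(N)$ in $U(\fing)$ is $\mc{J}_W$; Proposition \ref{Pro:enlarged-Joseph} splits $U(\fing)/\mc{J}_W\cong\C\times U(\fing)/\mc{J}_0$; the top component $L_{\fing}(\bar\mu)$ of the submodule generated by a hypothetical singular vector is finite dimensional (it sits in some $V^k(\fing)_d$); and $U(\fing)/\mc{J}_0$ admits no finite-dimensional modules, forcing $\bar\mu=0$ and a contradiction. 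Your two structural observations — that Theorem \ref{Th1} needs only Proposition \ref{Pro:variety-intermidiate} and not the conjecture, and that the entire difficulty for $n\geq 1$ is identifying the Zhu image $\mc{I}_n$ of the ideal generated by the $\sigma(w_i)^{n+1}$, which has no a priori reason to be a clean splitting ideal — also match the paper's state of knowledge. So there is no gap attributable to you beyond the one the authors themselves leave open; your proposed routes (explicit computation of $\mc{I}_n$, or a direct enumeration of singular vectors of $V^{k_n}(\fing)$) are sensible directions the paper does not pursue.

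One technical slip in your ``cheap first step'' should be corrected. The ordinary ideal $\langle w_i^{n+1}\rangle$ does \emph{not} have zero locus $\overline{\mathbb{O}_{min}}$: when $W$ is irreducible (types $E_6$, $E_7$, $E_8$) it is a principal ideal, so its zero locus is a hypersurface of dimension $\dim\fing-1$, far larger than $2h^{\vee}-2$. Consequently the sandwich $\langle w_i^{n+1}\rangle\subseteq P\subseteq J_0$ gives only $V(P)\supseteq\overline{\mathbb{O}_{min}}$ together with an upper bound that is too weak. What saves the computation — and what the paper's proof of Proposition \ref{Pro:variety-intermidiate} actually uses — is that the image $P$ of $N$ in $R_{V^{k_n}(\fing)}=S(\fing)$ is a Poisson ideal, hence $\ad\fing$-stable; therefore $\sqrt{P}$ contains not just the highest weight vectors $w_i$ but all their $\ad$-translates, i.e.\ all of $W$, hence $J_W$, and then Lemma \ref{Lem:Gan-Savin} yields $\sqrt{P}=\sqrt{J_W}=J_0$. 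You do name $P$ as a Poisson ideal, but your dimension count must run through this ad-stability of the radical, not through the zero locus of $\langle w_i^{n+1}\rangle$.
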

We have proven Conjecture  \ref{Conj:simplicity}
in the case that $n=0$
in type $D_4$, $E_6$, $E_7$, $E_8$
in the proof of Theorem \ref{Th:lifting-of-Joseph-ideal}.

 \begin{Rem} 
If $k_n\geq 0$, 
$\tilde{V}_{k_n}(\fing)$ is obviously not simple 
as the maximal
  submodule of $V^{k_n}(\fing)$ is generated by $e_{\theta}(-1)^{k_n+1}$.
 \end{Rem}
 \begin{Pro}\label{Pro:variety-intermidiate}
For each $n\geq 0$,
we have
$X_{\tilde{V}_{k_n}(\fing)}=\overline{\mathbb{O}_{min}}$.
 \end{Pro}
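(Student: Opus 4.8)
The plan is to identify $X_{\tilde V_{k_n}(\fing)}$ as the zero locus of an explicit ideal of $S(\fing)$ and pin that locus down by two opposite inclusions. Write $\pi\colon V^{k_n}(\fing)\twoheadrightarrow R_{V^{k_n}(\fing)}=S(\fing)$ for the canonical surjection onto Zhu's $C_2$-algebra, and let $\overline N=\pi(N)$ be the image of the defining submodule $N$. Since $\tilde V_{k_n}(\fing)=V^{k_n}(\fing)/N$, one has $R_{\tilde V_{k_n}(\fing)}=S(\fing)/\overline N$, so $X_{\tilde V_{k_n}(\fing)}$ is the zero locus $\mathcal Z(\overline N)\subseteq\fing^*$, where $\mathcal Z(\mathfrak b)$ denotes the zero locus of an ideal $\mathfrak b$. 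As inclusion of ideals reverses inclusion of zero loci, the equality $\mathcal Z(\overline N)=\overline{\mathbb O_{min}}$ amounts to the two statements $\mathcal Z(\overline N)\subseteq\overline{\mathbb O_{min}}$ and $\overline N\subseteq J_0$, and I would prove these separately.

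First I would establish $\mathcal Z(\overline N)\subseteq\overline{\mathbb O_{min}}$. The image under $\pi$ of the singular vector $\sigma(w_i)^{n+1}$ is $w_i^{n+1}$ (and likewise $w_1^{n+l-3}$, $w_2^{n+1}$ in type $D_l$), so these powers lie in $\overline N$ and hence $\mathcal Z(\overline N)\subseteq\mathcal Z(w_i^{n+1})=\mathcal Z(w_i)$ for each relevant $i$. Because $X_{\tilde V_{k_n}(\fing)}=\mathcal Z(\overline N)$ is $G$-invariant and $G\cdot w_i$ spans the irreducible component $W_i$, this upgrades to $\mathcal Z(\overline N)\subseteq\mathcal Z(W_i)$ for every $i$, whence $\mathcal Z(\overline N)\subseteq\mathcal Z(W)=\mathcal Z(J_W)$. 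By Lemma~\ref{Lem:Gan-Savin}, $\sqrt{J_W}=J_0$, so $\mathcal Z(J_W)=\overline{\mathbb O_{min}}$ and the first inclusion follows.

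For the reverse inclusion I would prove the stronger algebraic fact $\overline N\subseteq J_0$. The structural point is that $N$ is generated over $U(\affg)$ by singular vectors, so $N=U(\mf a)\cdot\{\sigma(w_i)^{n+1}\}$, where $\mf a$ is spanned by the mode-zero negative root vectors $f(0)$ of $\fing$ together with the modes $x(-m)$ for $x\in\fing$, $m\geq 1$. I would check mode by mode how the generators of $\mf a$ act after applying $\pi$: a mode $f(0)$ descends to the derivation $\ad f$ on $S(\fing)$, a mode $x(-1)$ descends to multiplication by $x$, and each $x(-m)$ with $m\geq 2$ sends $V^{k_n}(\fing)$ into $C_2(V^{k_n}(\fing))$ and so acts by $0$ on $R_{V^{k_n}(\fing)}$. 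Since $J_0$ is an $\ad\fing$-stable ideal of $S(\fing)$ and the symbols $w_i^{n+1}$ (resp.\ $w_1^{n+l-3},w_2^{n+1}$) already lie in $J_0$, an induction on the length of words in $\mf a$ shows that $\pi$ carries every element of $N$ into $J_0$; that is, $\overline N\subseteq J_0$. This yields $\mathcal Z(\overline N)\supseteq\mathcal Z(J_0)=\overline{\mathbb O_{min}}$, and combining with the previous paragraph gives $X_{\tilde V_{k_n}(\fing)}=\overline{\mathbb O_{min}}$.

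The hard part will be exactly the compatibility needed for $\overline N\subseteq J_0$. A positive mode $x(m)$ with $m\geq 1$ does not preserve $C_2(V^{k_n}(\fing))$, and its effect on a symbol produces lower-degree terms that need not lie in $J_0$ (for instance, it can create linear terms, which $J_0$ never contains). What saves the argument is precisely that $N$ is generated by \emph{singular} vectors, so that $N=U(\mf a)\cdot\{\text{singular vectors}\}$ involves no positive modes at all; the delicate verification is therefore this reduction together with the claim that $\ad f$, multiplication by $x$, and the vanishing of $x(-m)$ for $m\geq 2$ correctly describe the $\mf a$-action on $R_{V^{k_n}(\fing)}$. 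Everything else is the standard translation between $\overline N$, the ideals $J_W$, $J_0$, and the geometry of $\overline{\mathbb O_{min}}$.
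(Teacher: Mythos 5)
Your proof is correct and follows essentially the same route as the paper: both arguments identify $X_{\tilde V_{k_n}(\fing)}$ with the zero locus of the image $\overline N$ of $N$ in $S(\fing)\cong R_{V^{k_n}(\fing)}$ (using that $N$ is generated by \emph{singular} vectors, so only the modes $f(0)$, $x(-1)$ and $x(-m)$, $m\geq 2$, intervene in computing that image), and then conclude via Lemma \ref{Lem:Gan-Savin}. Your two inclusions $\mathcal{Z}(\overline N)\subseteq\overline{\mathbb{O}_{min}}$ and $\overline N\subseteq J_0$ are just a spelled-out version of the paper's sandwich $J\subseteq J_W\subseteq\sqrt{J}$, which gives $\sqrt{J}=\sqrt{J_W}=J_0$ at once.
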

  \begin{proof}
Set $k=k_n$.
The exact sequence $0\ra N\ra V^k(\fing)\ra \tilde{V}_k(\fing)\ra 0$
induces an exact sequence
\begin{align*}
 N/\fing[t^{-1}]t^{-2}N
\ra V^k(\fing)/\fing[t^{-1}]t^{-2}V^k(\fing)
\ra
 \tilde{V}_k(\fing)/\fing[t^{-1}]t^{-2}\tilde{V}_k(\fing)\ra 0.
\end{align*}
 Under the isomorphism
$V^k(\fing)/\fing[t^{-1}]t^{-2}V^k(\fing)\cong S(\fing)$,
the image of $ N/\fing[t^{-1}]t^{-2}N$ in
  $V^k(\fing)/\fing[t^{-1}]t^{-2}V^k(\fing)$ is identified with 
  the ideal $J$ of $S(\fing)$ generated by
some powers of $w_i$ for all $i$.
  Hence $J\subset J_W\subset    \sqrt{J}$.
   Therefore,
\begin{align*}
\sqrt{J}=\sqrt{J_W}=J_0
\end{align*}
by Lemma \ref{Lem:Gan-Savin}
 as required.  \end{proof}
 \begin{proof}[Proof of Theorem \ref{Th1}]
For $\fing$ of type $A_1$, $A_2$, $G_2$, $F_4$,
the number $-h^{\vee}/6-1$  is admissible, 
and the statement (1) of the theorem is  a special case of 
 \cite[Theorem 5.14]{Ara09b}.
So let us assume that $\fing$ is of type $D_l$, $l\geq 4$, $E_6$, $E_7$,
  or $E_8$ as above.
Since $V_{k_n}(\fing)$ is a quotient of $\tilde{V}_{k_n}(\fing)$,
Proposition \ref{Pro:variety-intermidiate}
implies that
\begin{align*}
 X_{V_{k_n}(\fing)} \subset \overline{\mathbb{O}_{min}}={\mathbb{O}_{min}}\cup \{0\}.
\end{align*}
Therefore  $X_{V_{k_{n}}(\fing)}$ is either $\{0\}$ or
  $\overline{\mathbb{O}_{min}}$.
The assertion follows since $X_{V_k(\fing)}=\{0\}$ if and only if $k\in
  \Z_{\geq 0}$ by \cite[Proposition 4.25]{Ara09b} (see also Theorem
  \ref{Th:previous} (2) and (3) (a)).
\end{proof}
The following assertion was proved in \cite{Per13}
in the case that $\fing$ is of type $D_4$ and 
$k=-2$.
 \begin{Co} \label{cor:finitely-modules}
Let $\fing$, $k$ be as in Theorem \ref{Th1}.
Then $V_k(\fing)$ has only finitely many simple modules in the category $\mathcal{O}$.
 \end{Co}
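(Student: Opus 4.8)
The plan is to combine Zhu's correspondence with the geometric input $X_{V_k(\fing)}=\overline{\mathbb{O}_{min}}\subseteq\mc{N}$ now supplied by Theorem \ref{Th1}. First I would reduce the count to one about $\fing$-modules: by Zhu's theorem \cite{Zhu96}, the assignment $M\mapsto M_{top}$ is a bijection between the simple objects of category $\mc{O}$ for $\affg$ at level $k$ that are $V_k(\fing)$-modules and the simple highest weight $\fing$-modules $L_{\fing}(\mu)$ that are modules over $A(V_k(\fing))=U(\fing)/\mc{I}_k$, i.e.\ those $L_{\fing}(\mu)$ with $\mc{I}_k\subseteq\on{Ann}_{U(\fing)}L_{\fing}(\mu)$. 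So it suffices to show that only finitely many weights $\mu\in\mf{h}^*$ produce such modules.

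Next I would transfer the associated variety bound to the Zhu algebra. Equipping $A(V_k(\fing))=U(\fing)/\mc{I}_k$ with the filtration induced from the PBW filtration of $U(\fing)$, we have $\gr A(V_k(\fing))=S(\fing)/\gr\mc{I}_k$, and the canonical surjection of Poisson algebras $R_{V_k(\fing)}\twoheadrightarrow\gr A(V_k(\fing))$ (see \cite{Ara12}) yields $\Spec\big(S(\fing)/\gr\mc{I}_k\big)\subseteq X_{V_k(\fing)}$. By Theorem \ref{Th1} the right-hand side is $\overline{\mathbb{O}_{min}}\subseteq\mc{N}$, so the zero locus of $\gr\mc{I}_k$ lies in the nilpotent cone.

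The key step is to deduce that only finitely many central characters occur. Let $p_1,\dots,p_l$ be homogeneous generators of positive degree of $\C[\fing^*]^{G}=S(\fing)^{\fing}$, so that $\mc{N}=\{p_1=\dots=p_l=0\}$. Since the zero locus of $\gr\mc{I}_k$ is contained in $\mc{N}$, the Nullstellensatz gives $p_i^{N_i}\in\gr\mc{I}_k$ for suitable $N_i$. Hence the image of $S(\fing)^{\fing}$ in $S(\fing)/\gr\mc{I}_k$ is a quotient of $\C[p_1,\dots,p_l]/(p_1^{N_1},\dots,p_l^{N_l})$ and is finite-dimensional. This image is exactly $\gr\bar Z$, where $\bar Z$ denotes the image of the center $Z(\fing)$ of $U(\fing)$ under $U(\fing)\twoheadrightarrow A(V_k(\fing))$, using $\gr Z(\fing)=S(\fing)^{\fing}$ and the compatibility of the induced filtration with $\gr$. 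Therefore $\bar Z$ is finite-dimensional, so it has only finitely many maximal ideals, i.e.\ only finitely many central characters $\chi\colon Z(\fing)\to\C$ can act on a simple $A(V_k(\fing))$-module.

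Finally I would assemble the count. For a fixed central character $\chi$, the simple highest weight modules $L_{\fing}(\mu)$ with $\chi_{\mu}=\chi$ have their highest weights confined to a single dot-orbit $\{w\cdot\mu_0\mid w\in{\rm W}\}$ (Harish-Chandra), hence there are at most $|{\rm W}|$ of them. Combined with the finiteness of the admissible central characters from the key step, only finitely many $\mu$ yield $A(V_k(\fing))$-modules, and by the Zhu correspondence of the first paragraph $V_k(\fing)$ has only finitely many simple modules in $\mc{O}$. I expect the main obstacle to be precisely the key step: rigorously passing from the inclusion of the zero locus of $\gr\mc{I}_k$ in $\mc{N}$ to the finite-dimensionality of $\bar Z$, which hinges on the Poisson surjection $R_{V_k(\fing)}\twoheadrightarrow\gr A(V_k(\fing))$ and on the fact that taking associated graded commutes with forming the image of the center.
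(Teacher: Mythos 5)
Your proposal follows essentially the same route as the paper's proof: the Poisson surjection $R_{V_k(\fing)}\twoheadrightarrow \gr A(V_k(\fing))$, the resulting inclusion $\on{Specm}(\gr A(V_k(\fing)))\subset X_{V_k(\fing)}\subset \mc{N}$, finiteness of central characters via the image of $Z(\fing)$, and the Zhu/Harish-Chandra bookkeeping — indeed you supply more detail than the paper, which compresses the last steps into ``the center acts finitely on $A(V_k(\fing))$.'' The one point you flag, that $\gr$ commutes with taking the image of the center, is true but is not a purely filtration-theoretic fact: it uses complete reducibility of the adjoint action (given an element of $\mc{I}_k\cap F_nU(\fing)$ whose symbol is $\ad\fing$-invariant, project it onto its invariant component to produce an element of $Z(\fing)\cap \mc{I}_k$ with the same symbol), which closes the gap you anticipated.
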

 \begin{proof}
%
By \cite[Proposition 2.17(c)]{De-Kac06}, \cite[Proposition 3.3]{ALY} there is a surjection
\begin{align*}
 R_{V_k(\fing)}\twoheadrightarrow \gr A(V_k(\fing))
\end{align*}
of Poisson algebras,
where $\gr A(V_k(\fing))$ is the associated graded algebra of
  $A(V^k(\fing))$ with respect to Zhu's filtration \cite{Zhu96}.
Hence $\on{Specm}(\gr A(V_k(\fing)))\subset X_{V_k(\fing)}\subset
  \mc{N}$.
It follows that the center $Z(\fing)$ of $U(\fing)$ acts finitely 
on $A(V_k(\fing))$ and 
therefore
 there are only finitely many possible central characters of simple
  modules
of $A(V_k(\fing))$.
 \end{proof}

 \begin{Rem}
Let $\fing,f$ be as in Theorem \ref{Th1}.
As in the same way as \cite[Theorem 9.5]{A2012Dec},
one finds that
$X_{V_k(\fing)}=\on{Specm} (\gr A(V_k(\fing)))$,
which
 gives another evidence for \cite[Conjecture 1]{A2012Dec}.
 \end{Rem}

  \begin{Conj}\label{Conj:when-variety-is-the-minimal-nilpotent-orbit-closure}
   We have
   \begin{align*}
 X_{V_{k}(\fing)}=\overline{\mathbb{O}_{min}}
\end{align*}
if and only if
   \begin{enumerate}
    \item $\fing$ is of type $A_1$, and $k$ is  a rational admissible
	  number 
	  that is not an integer, or $k=-2$.
    \item $\fing$ is of type $A_2$, $C_l$ $(l\geq 2)$,
	  $F_4$, and $k$ is admissible with denominator $2$.
    \item $\fing$ is of type $G_2$, and $k$ is admissible with denominator $3$, 
    or $k=-1$. 
    \item $\fing$ is of type $D_4$, $E_6$, $E_7$, $E_8$ and
	  $k$ is an integer such that
	  \begin{align*}
	   -\frac{h^{\vee}}{6}-1\leq k\leq -1.
	  \end{align*}
	      \item $\fing$ is of type $D_l$ with $l\geq 5$, and $k=-2,-1$.
   \end{enumerate}
 \end{Conj}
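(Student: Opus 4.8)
The plan is to reduce, for $\fing$ not of type $A$, the equality $X_{V_k(\fing)}=\overline{\mathbb{O}_{min}}$ to a vanishing condition on the highest weight vectors $w_i$ of $W$, and to treat types $A_1$, $A_2$ separately, mostly through admissible representations. Write $\mf{I}_k$ for the ($\fing$-stable) kernel of the surjection $S(\fing)\twoheadrightarrow R_{V_k(\fing)}$, so that $X_{V_k(\fing)}$ is its zero locus. Since $\Omega^2\in J_W$ and $\sqrt{J_W}=J_0$ by Lemma \ref{Lem:Gan-Savin}, the inclusion $W\subset\sqrt{\mf{I}_k}$ already forces $J_0\subset\sqrt{\mf{I}_k}$, hence $X_{V_k(\fing)}\subset\overline{\mathbb{O}_{min}}$; conversely $X_{V_k(\fing)}=\overline{\mathbb{O}_{min}}$ gives $W\subset J_0=\sqrt{\mf{I}_k}$. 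As $\overline{\mathbb{O}_{min}}=\mathbb{O}_{min}\cup\{0\}$ carries only two $G$-orbits, and $X_{V_k(\fing)}=\{0\}$ exactly when $k\in\Z_{\geq 0}$ by \cite[Proposition 4.25]{Ara09b}, the statement collapses to
\begin{align*}
X_{V_k(\fing)}=\overline{\mathbb{O}_{min}}\iff\Big(k\notin\Z_{\geq 0}\ \text{and}\ w_i\in\sqrt{\mf{I}_k}\ \text{for every }i\Big),
\end{align*}
where, by $\fing$-stability, $W\subset\sqrt{\mf{I}_k}$ means that some power of each $w_i$ vanishes in $R_{V_k(\fing)}$.

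For the ``if'' direction I would dispatch the list by source. The non-admissible integer levels in (4) and (5) are exactly Theorem \ref{Th1}(2),(3). The admissible levels in (1)--(3) should follow from the determination of associated varieties of admissible affine vertex algebras in \cite{Ara09b}: for each such $k$ one identifies the orbit attached to $k\Lam_0$ and checks, using the prescribed denominator, that it is $\mathbb{O}_{min}$, with \cite{A12-2} supplying the integral Weyl group bookkeeping. The only genuinely new entries are $k=-1$ in type $G_2$ and $k=-2$ in type $A_1$; for $G_2$ I would imitate the proof of Theorem \ref{Th1}, producing a power $\sigma(w_i)^{n+1}$ as a singular vector at $k=-1$ via Theorem \ref{Th2}, forming $\tilde V_k(\fing)$, applying Proposition \ref{Pro:variety-intermidiate}, and establishing the relevant instance of Conjecture \ref{Conj:simplicity}, while the critical level $k=-2$ for $A_1$ is handled on its own.

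For the ``only if'' direction the idea is to push $w_i\in\sqrt{\mf{I}_k}$ back onto singular vectors. Vanishing of $w_i^{m}$ in $R_{V_k(\fing)}$ says that $\sigma(w_i)^{m}$ lies in $\widehat{\mc{J}}_k$ modulo $\fing[t^{-1}]t^{-2}V^k(\fing)$, and the level classification of Theorem \ref{Th2} shows that a power of $\sigma(w_i)$ can enter $\widehat{\mc{J}}_k$ only along the arithmetic progression $k=n-h^{\vee}/6-1$ attached to each $w_i$ (and its type $B$, $C$, $D$ analogues). Since $W\subset\sqrt{\mf{I}_k}$ demands this for \emph{all} $i$ simultaneously, the surviving $k$ are the common levels of these progressions with $k\notin\Z_{\geq 0}$; this should recover the intervals in (1)--(5), and in type $B_l$, where the progressions $k=n-l+3/2$ and $k=n-2$ attached to $w_1$ and $w_2$ are disjoint, it explains the absence of $B_l$ from the list.

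The main obstacle is twofold and sits entirely in this last direction. First, ``$w_i^{m}\in\mf{I}_k$'' only places $\sigma(w_i)^{m}$ in $\widehat{\mc{J}}_k+\fing[t^{-1}]t^{-2}V^k(\fing)$, not in $\widehat{\mc{J}}_k$ as a genuine singular vector; closing this gap requires a classification of the degree-$2m$ singular vectors of $V^k(\fing)$ going well beyond the degree-two analysis of Theorem \ref{Th2}, which does not even exhibit the $w_2$-vectors needed in type $C_l$. Second, and more seriously, one must show that for every $k$ outside the list $X_{V_k(\fing)}$ is strictly larger than $\overline{\mathbb{O}_{min}}$---either all of $\fing^*$ when $V_k(\fing)=V^k(\fing)$, or a bigger nilpotent orbit closure---which is tantamount to controlling $X_{V_k(\fing)}$ at \emph{every} level. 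This is precisely the open problem of characterizing the $k$ with $X_{V_k(\fing)}\subset\mc{N}$, and the counterexamples to the admissibility heuristic produced in this very paper show how delicate it is; I therefore expect this global control, rather than any single explicit computation, to be where the genuine difficulty of the Conjecture resides.
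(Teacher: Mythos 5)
The statement you are trying to prove is a \emph{Conjecture} in the paper; the paper offers no proof of it. All the paper records is that the ``if'' part follows from Theorem \ref{Th1} together with \cite[Theorem 5.14]{Ara09b} (the admissible cases), and that type $A_1$ can be verified directly. Your proposal in fact arrives at the same position: your reduction --- $X_{V_k(\fing)}=\overline{\mathbb{O}_{min}}$ if and only if $k\notin\Z_{\geq 0}$ and $W\subset\sqrt{\mf{I}_k}$, via Lemma \ref{Lem:Gan-Savin}, the fact that $\overline{\mathbb{O}_{min}}$ consists of two $G$-orbits, and \cite[Proposition 4.25]{Ara09b} --- is sound, and your admission that the ``only if'' direction requires controlling $X_{V_k(\fing)}$ at \emph{every} level is exactly why the statement is left as a conjecture rather than a theorem. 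So your assessment of where the difficulty sits agrees with the paper's own; but what you have written is a proof plan with an acknowledged hole, not a proof, and it should not be presented as establishing the statement.

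Beyond the admitted gap, one concrete step in your ``if'' direction fails. Your plan for the entry ``$G_2$, $k=-1$'' is to imitate the proof of Theorem \ref{Th1} by producing a singular vector $\sigma(w_1)^{n+1}$ at $k=-1$ via Theorem \ref{Th2}; but Theorem \ref{Th2}(1)(b) explicitly excludes type $G_2$, and even formally the progression $k=n-h^{\vee}/6-1=n-5/3$ never equals $-1$ for $n\in\Z_{\geq 0}$. Moreover $k=-1$ is not admissible for $G_2$ (one would need $k+4=p/q$ with $p\geq 4$ and $\gcd(p,q)=1$), so \cite[Theorem 5.14]{Ara09b} does not apply either; this case lies genuinely outside the reach of the paper's methods, which is part of what makes the statement conjectural. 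Finally, note that your ``only if'' heuristic --- that a power of $w_i$ dying in $R_{V_k(\fing)}$ forces $k$ onto the arithmetic progressions of Theorem \ref{Th2} --- conflates membership in $\widehat{\mc{J}}_k+\fing[t^{-1}]t^{-2}V^k(\fing)$ with being a singular vector; at non-admissible levels the maximal ideal $\widehat{\mc{J}}_k$ need not be generated by singular vectors at all, so even as a strategy this step would require substantially new input, as you partly concede.
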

One can easily verify Conjecture
\ref{Conj:when-variety-is-the-minimal-nilpotent-orbit-closure}
for type $A_1$.
Note that
the ``if'' part of Conjecture
\ref{Conj:when-variety-is-the-minimal-nilpotent-orbit-closure}
follows from Theorem \ref{Th1} and \cite[Theorem 5.14]{Ara09b}.

\section{Proof of  Theorem \ref{Th:new-lisse}}
Let 
  $H^{\frac{\infty}{2}+\bullet}_{f_{\theta}}(M)$ denote 
  the BRST cohomology associated with
the quantized Drinfeld-Sokolov reduction
corresponding  to $f_{\theta}$ (\cite{KacRoaWak03}),
so that
\begin{align*}
\W^k(\fing,f_{\theta})=H^{\frac{\infty}{2}+0}_{f_{\theta}}(V^k(\fing)).
\end{align*}
The correspondence
$M\mapsto  H^{\frac{\infty}{2}+0}_{f_{\theta}}(M)$
gives a functor
$\mathcal{O}_k\ra \W^k(\fing,f_{\theta})\on{-Mod}$,
where $\mathcal{O}_k$ is the category $\mathcal{O}$ of $\affg$ of level
  $k$
  and $\W^k(\fing,f_{\theta})\on{-Mod}$ is the category of $\W^k(\fing,f_{\theta})$-modules.

Recall that $\W_k(\fing,f_{\theta})$ is the unique simple quotient of $\W^k(\fing,f_{\theta})$. 

\begin{Th}\label{Th:previous} 
 \begin{enumerate}
  \item {\rm (\cite[Main Theorem]{Ara05})}
	The functor
	$\mathcal{O}_k\ra \W^k(\fing,f_{\theta})\on{-Mod}$,
	$M\mapsto  H^{\frac{\infty}{2}+0}_{f_{\theta}}(M)$,
	is exact.
  \item {\rm (\cite[Main Theorem]{Ara05})}
We have
$\BRS{L(\lam})
=0$
if $\lam(\alpha_0^{\vee})\in \Z_{\geq 0}$, where $\alpha_0^{\vee}=K-\theta$.
Otherwise 
$\BRS{L(\lam})$  is an irreducible highest weight representation of $\W^k(\fing,f_{\theta})$.
In particular, 
 \begin{align*}
		 H^{\frac{\infty}{2}+0}_{f_{\theta}}(V_k(\fing))
		 \cong
		 \begin{cases}
		   	\W_k(\fing,f_{\theta})&\text{if }
		  k\not\in \Z_{\geq 0}
,\\
		  0&\text{if }k\in \Z_{\geq 0}.
		 \end{cases}
		\end{align*}  
\item {\rm (\cite[Theorem 4.21]{Ara09b})}
For any quotient $V$ of $V^k(\fing)$ 
we have
\begin{align*}
 X_{\BRS{V}}=X_V\cap \mc{S}_{min}.
\end{align*}
Hence
\begin{enumerate}
   \item  {\rm (\cite[Proposition 4.22]{Ara09b})}
	$H^{\frac{\infty}{2}+0}_{f_{\theta}}(V)\ne 0$
	if and only if
	$\overline{\mathbb{O}_{min}}\subset X_{V}$.
  \item  {\rm (\cite[Theorem 4.23]{Ara09b})}
	$H^{\frac{\infty}{2}+0}_{f_{\theta}}(V)$ is a  lisse vertex
	algebra if
	$X_V=\overline{\mathbb{O}_{min}}$.

\end{enumerate} \end{enumerate}
\end{Th}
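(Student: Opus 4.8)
The plan is to establish the three parts in turn, since each is a separate known statement, all flowing from an analysis of the BRST complex computing $\BRS{\,\cdot\,}$ together with its effect on characters and on associated graded Poisson structures.

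\emph{Exactness and behaviour on irreducibles.} For part (1) I would first prove the sharper vanishing $H^{\frac{\infty}{2}+i}_{f_\theta}(M)=0$ for every $i\neq 0$ and every $M$ in $\mc{O}_k$; exactness of the degree-zero functor then follows at once from the long exact cohomology sequence attached to a short exact sequence of $\affg$-modules. To get the vanishing I would reduce to Verma modules: the differential is built from the grading $\fing=\fing(-1)\+\fing(-1/2)\+\fing(0)\+\fing(1/2)\+\fing(1)$, and on a Verma module the complex reduces to a Koszul-type complex whose cohomology, computed by an Euler--Poincar\'e (character) argument, is concentrated in degree $0$. Since every object of $\mc{O}_k$ is built from Verma modules, a spectral-sequence argument propagates the concentration in degree $0$ to all of $\mc{O}_k$. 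For part (2) I would feed $M=L(\lam)$ into the now-exact functor and analyse the top Kazhdan-degree piece, which is controlled by the $\mf{sl}_2$-triple $(e_\theta,h_\theta,f_\theta)$: the reduction annihilates exactly those $L(\lam)$ that are integrable in the $\theta$-direction, i.e.\ those with $\lam(\alpha_0^\vee)\in\Z_{\geq 0}$, because the corresponding relation collapses the top component. When $\lam(\alpha_0^\vee)\notin\Z_{\geq 0}$ the image is a nonzero highest-weight $\W^k(\fing,f_\theta)$-module whose character matches the irreducible of the induced highest weight, hence is irreducible. The special case $V_k(\fing)=L(k\Lam_0)$ follows since $\bar\lam=0$ gives $k\Lam_0(\alpha_0^\vee)=k$, so the reduction vanishes precisely for $k\in\Z_{\geq 0}$ and otherwise equals the simple quotient $\W_k(\fing,f_\theta)$.

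\emph{The associated-variety formula and its corollaries.} For part (3) the tool is Zhu's $C_2$ (canonical) filtration. I would filter the BRST complex accordingly and identify its associated graded with a classical Poisson-geometric Hamiltonian reduction: passing to $\gr$ turns the quantized Drinfeld--Sokolov reduction into the Koszul complex of the moment-map constraints that cut $\fing^*$ down to the Slodowy slice $\mc{S}_{min}=f_\theta+\fing^{e_\theta}$. Transversality of $\mc{S}_{min}$ to coadjoint orbits makes these constraints a regular sequence, so $\gr$ of the cohomology is supported exactly on $X_V\cap\mc{S}_{min}$, giving $X_{\BRS{V}}=X_V\cap\mc{S}_{min}$. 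Corollary (a) is then immediate, $\BRS{V}\neq 0\iff X_{\BRS{V}}\neq\emptyset\iff X_V\cap\mc{S}_{min}\neq\emptyset$; to convert the last condition into $\overline{\mathbb{O}_{min}}\subset X_V$ I would use the Kazhdan $\C^*$-action, which contracts $\mc{S}_{min}$ onto the point $f_\theta$. Since $X_V$ is closed and stable under this action, any point of $X_V\cap\mc{S}_{min}$ limits to $f_\theta$, forcing $f_\theta\in X_V$ and hence $\overline{\mathbb{O}_{min}}=\overline{G.f_\theta}\subset X_V$ by $G$-invariance; the converse is clear from $f_\theta\in\overline{\mathbb{O}_{min}}\cap\mc{S}_{min}$. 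Corollary (b) follows at once: if $X_V=\overline{\mathbb{O}_{min}}$ then $X_{\BRS{V}}=\overline{\mathbb{O}_{min}}\cap\mc{S}_{min}=\{f_\theta\}$ is a single point, so $\dim X_{\BRS{V}}=0$ and $\BRS{V}$ is lisse.

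\emph{Main obstacle.} The delicate step is the associated-variety identity (3): one must show that taking $\gr$ of the BRST complex genuinely yields the Koszul complex of a \emph{regular} sequence with no surviving higher homology, so that the Poisson support is precisely the scheme-theoretic intersection $X_V\cap\mc{S}_{min}$ and not something larger. This rests on the transversality of $\mc{S}_{min}$ and on checking that the induced filtration on cohomology is separated and that the spectral sequence from $\gr$ degenerates; the regularity of the constraint sequence for the minimal nilpotent $f_\theta$ is exactly where the geometry of the Slodowy slice enters, and is the crux of the whole argument.
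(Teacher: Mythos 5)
The paper itself offers no proof of this theorem: all five assertions are imported verbatim, (1)--(2) from \cite{Ara05} and (3), (3)(a), (3)(b) from \cite{Ara09b}, so your proposal has to be measured against those published arguments. Your treatment of part (3) is faithful to the route of \cite{Ara09b}: filter the BRST complex by the canonical (Li/Zhu) filtration, identify the associated graded with the classical Hamiltonian reduction onto $\mc{S}_{min}$, and use transversality/freeness to degenerate the spectral sequence. Your derivation of (a) from the contracting Kazhdan $\C^*$-action (which preserves $X_V$ because $X_V$ is conic and $G$-invariant, so any point of $X_V\cap\mc{S}_{min}$ flows inside the closed set $X_V$ to $f_\theta$) and of (b) from $\overline{\mathbb{O}_{min}}\cap\mc{S}_{min}=\{f_\theta\}$ is exactly the standard argument, and you correctly isolate the crux, namely that no higher homology of the constraint complex survives, so the support is precisely $X_V\cap\mc{S}_{min}$.

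Parts (1)--(2) as you sketch them contain two genuine gaps. First, the d\'evissage ``vanishing for Vermas $\Rightarrow$ vanishing on all of $\mc{O}_k$'' does not work as stated: objects of $\mc{O}_k$ need not have finite length, let alone finite Verma flags --- precisely at the non-admissible levels this paper is concerned with, the structure of $\mc{O}_k$ is unknown --- so neither a long-exact-sequence induction nor a finite spectral sequence propagates the vanishing from Verma modules to arbitrary objects. In \cite{Ara05} the vanishing $H^{\frac{\infty}{2}+i}_{f_\theta}(M)=0$ for $i\neq 0$ is proved directly for an \emph{arbitrary} $M$ in $\mc{O}_k$, by splitting the differential as $d=d^++d^-$ and computing the $d^-$-cohomology by a Kostant-type argument valid module-by-module (the associated spectral sequence converges because the relevant graded pieces are finite-dimensional); no reduction to Vermas occurs. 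Second, your irreducibility step for $\BRS{L(\lam)}$ is circular: you claim its character ``matches the irreducible of the induced highest weight,'' but the characters of irreducible $\W^k(\fing,f_\theta)$-modules are not known in advance --- they are an \emph{output} of this theorem, via the Euler--Poincar\'e principle, once irreducibility is established. The actual argument shows that the functor sends Verma modules of $\affg$ to Verma modules of $\W^k(\fing,f_\theta)$ and is compatible with duality, so that $\BRS{L(\lam)}$ is realized as the image of a $\W$-Verma module in its contragredient dual, hence zero or irreducible; the vanishing criterion $\lam(\alpha_0^\vee)\in\Z_{\geq 0}$ then comes from integrability along the $\mf{sl}_2$ attached to the affine root $\alpha_0$ (spanned by $e_\theta(-1)$, $f_\theta(1)$), not the finite $\theta$-direction. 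With these two repairs your outline matches the cited proofs.
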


 \begin{Rem}\label{Rem:when-simples-are-iso}
By \cite[Theorem 6.3]{KacWak04},
the image $\BRS{M(\lam)}$ of the Verma module $M(\lam)$ of $\affg$ with
  highest weight $\lam$ is isomorphic to  a Verma module of
  $\W^k(\fing,f_{\theta})$.
Moreover, all the Verma modules of  $\W^k(\fing,f_{\theta})$ appear in this way.
By  Theorem~\ref{Th:previous}  (1), (2),
$\BRS{L(\lam)}$ is the  unique simple quotient
of $\BRS{M(\lam)}$ 
provided  $\lam(\alpha_0^{\vee})\not \in \Z_{\geq 0}$.
From this, one sees that all the irreducible highest weight
  representations of $\W^k(\fing,f_{\theta})$ appear as 
$\BRS{L(\lam)}$ for some $\lam$, see \cite{Ara05} for the details.

Let $k$ be non-critical, that is,
$k+h^\vee \ne 0$.
By  \cite[Theorem 6.3]{KacWak04},
one finds that 
$\BRS{M(\lam)}\cong \BRS{M(\mu)}$ if and only if 
$\mu=s_0\circ \lam$, where $s_0$ is the reflection corresponding to $\alpha_0$.
It follows that
$\BRS{L(\lam)}$ and $\BRS{L(\mu)}$ are nonzero and isomorphic if and
  only if $\lam(\alpha_0^{\vee}),\mu (\alpha_0^{\vee})\not \in \Z_{\geq
  0}$
and $\mu=s_0\circ \lam$.
 \end{Rem}

 \begin{proof}[Proof of Theorem \ref{Th:new-lisse}] 
 Let $k=k_n$ with $n\geq 0$ as in \S \ref{sec:proof-of-th1}.
We have shown that $X_{\tilde{V}_k(\fing)}=\overline{\mathbb{O}_{min}}$
  in
 Proposition \ref{Pro:variety-intermidiate}.
Hence the vertex algebra
 $H^{\frac{\infty}{2}+0}_{f_{\theta}}(\tilde{V}_k(\fing))$ is
 nonzero and lisse by 
  Theorem \ref{Th:previous} (3).
Note that
both
$\W_k(\fing,f_{\theta})$ and
  $  H^{\frac{\infty}{2}+0}_{f_{\theta}}(\tilde{V}_k(\fing))$ are quotients
  of
  $\W^k(\fing,f_{\theta})$.
Indeed,
$  H^{\frac{\infty}{2}+0}_{f_{\theta}}(\tilde{V}_k(\fing))$
is a quotient of 
  $\W^k(\fing,f_{\theta})=H^{\frac{\infty}{2}+0}_{f_{\theta}}({V}^k(\fing))$ 
by 
   Theorem \ref{Th:previous} (1)
since
 $\tilde{V}_k(\fing)$ is a quotient of 
  $V^k(\fing)$. 
  Because it is a unique simple quotient of
   $\W^k(\fing,f_{\theta})$,
  $\W_k(\fing,f_{\theta})$ is a quotient of 
  $H^{\frac{\infty}{2}+0}_{f_{\theta}}(\tilde{V}_k(\fing))$, which is lisse
  as we have just proved.
 Therefore $\W_k(\fing,f_{\theta})$ is  lisse as well.
 \end{proof}

 \begin{Conj}Let $\fing$ and $k$ be as in
Theorem \ref{Th:new-lisse}.
Then 
 $\BRS{\tilde{V}_k(\fing)}\cong \W_k(\fing,f_{\theta})$,
where $\tilde{V}_k(\fing)$ is defined above.
 \end{Conj}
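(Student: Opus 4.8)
The plan is to promote the surjection already constructed in the proof of Theorem~\ref{Th:new-lisse} to an isomorphism. There we saw that $\BRS{\tilde{V}_k(\fing)}$ is a nonzero quotient of $\W^k(\fing,f_{\theta})$ by the exactness of the Drinfeld--Sokolov reduction (Theorem~\ref{Th:previous}~(1), together with Proposition~\ref{Pro:variety-intermidiate}), and that, $\W_k(\fing,f_{\theta})$ being the \emph{unique} simple quotient of $\W^k(\fing,f_{\theta})$, there is a canonical surjection $\BRS{\tilde{V}_k(\fing)}\twoheadrightarrow \W_k(\fing,f_{\theta})$. Hence the entire content of the conjecture is the \emph{simplicity} of the vertex algebra $\BRS{\tilde{V}_k(\fing)}$: equivalently, writing $\BRS{\tilde{V}_k(\fing)}=\W^k(\fing,f_{\theta})/J$ with $J$ the image of $\BRS{N}$, one must show that $J$ is already the maximal proper ideal.

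For negative levels $k=k_n<0$ I would deduce this from the simplicity Conjecture~\ref{Conj:simplicity}. Indeed, if $\tilde{V}_{k_n}(\fing)=V_{k_n}(\fing)$ then, since $k_n\notin\Z_{\geq 0}$, Theorem~\ref{Th:previous}~(2) gives at once $\BRS{\tilde{V}_{k_n}(\fing)}=\BRS{V_{k_n}(\fing)}\cong\W_{k_n}(\fing,f_{\theta})$. So at negative levels the conjecture is a formal consequence of Conjecture~\ref{Conj:simplicity}, itself known for $n=0$ in types $D_4,E_6,E_7,E_8$ (see the proof of Theorem~\ref{Th:lifting-of-Joseph-ideal}).

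The nonnegative levels demand a different argument, because by the Remark following Conjecture~\ref{Conj:simplicity} the module $\tilde{V}_{k_n}(\fing)$ is reducible as soon as $k_n\geq 0$, whereas $\BRS{V_{k_n}(\fing)}=0$ there. Writing $M$ for the maximal submodule of $V^{k_n}(\fing)$, the exact functor $\BRS{-}$ applied to $0\ra M/N\ra \tilde{V}_{k_n}(\fing)\ra V_{k_n}(\fing)\ra 0$, together with $\BRS{V_{k_n}(\fing)}=0$, yields $\BRS{\tilde{V}_{k_n}(\fing)}\cong\BRS{M/N}$. Since $M/N$ is generated by the image of the singular vector $e_{\theta}(-1)^{k_n+1}$, of weight $\mu$ with $\mu(\alpha_0^\vee)=-k_n-2<0$, Theorem~\ref{Th:previous}~(2) and Remark~\ref{Rem:when-simples-are-iso} are available; the task is then to show that this reduction is simple and identifies with $\W_{k_n}(\fing,f_{\theta})$, which I would approach by enumerating the singular vectors of $M/N$ and discarding those that are annihilated by the reduction.

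The most promising uniform route, and the one I would ultimately favour, is to compute the (Ramond-twisted) Zhu algebra of $\BRS{\tilde{V}_k(\fing)}$ through the compatibility of Zhu's functor with Drinfeld--Sokolov reduction, realizing $A(\BRS{\tilde{V}_k(\fing)})$ as the finite $W$-algebra reduction of $A(\tilde{V}_k(\fing))$; at $k=-h^{\vee}/6-1$ the latter is $U(\fing)/\mc{J}_W\cong\C\times U(\fing)/\mc{J}_0$ by Theorem~\ref{Th:lifting-of-Joseph-ideal}, and Premet's link between $\mc{J}_0$ and one-dimensional representations of $U(\fing,f_{\theta})$ should pin down the top space, matching that of $\W_k(\fing,f_{\theta})$. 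The main obstacle is precisely the simplicity input: proving Conjecture~\ref{Conj:simplicity} amounts to showing that $V^{k_n}(\fing)$ carries \emph{no} singular vectors beyond its highest weight vector and the explicit vectors $\sigma(w_i)^{\bullet}$ already quotiented out, a control over the full singular-vector content of the affine vertex algebra that the elementary $\mf{sl}_2$-computations of Section~\ref{sec:sing_vectors} do not by themselves provide.
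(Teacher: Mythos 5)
This statement is left in the paper as a \emph{conjecture}: the authors give no proof of it, so there is nothing to match your argument against, and your proposal does not in fact supply a proof. The sound parts of your write-up are either already contained in the paper's proof of Theorem \ref{Th:new-lisse} or are conditional on another open conjecture. The surjection $\BRS{\tilde{V}_k(\fing)}\twoheadrightarrow \W_k(\fing,f_{\theta})$ (exactness of the reduction, Proposition \ref{Pro:variety-intermidiate}, uniqueness of the simple quotient) is verbatim the paper's argument, and you correctly observe that the whole content of the conjecture is the simplicity of $\BRS{\tilde{V}_k(\fing)}$. But for $k_n<0$ you then derive the statement from Conjecture \ref{Conj:simplicity}; the implication is valid (if $\tilde{V}_{k_n}(\fing)=V_{k_n}(\fing)$ and $k_n\notin\Z_{\geq 0}$, Theorem \ref{Th:previous} (2) gives the isomorphism), yet Conjecture \ref{Conj:simplicity} is itself open in the paper, so reducing one open conjecture to another proves nothing new. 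The only cases you settle unconditionally are $n=0$ in types $D_4$, $E_6$, $E_7$, $E_8$, where simplicity is proved in Theorem \ref{Th:lifting-of-Joseph-ideal}; note that even at $n=0$ this leaves out type $D_l$, $l\geq 5$, at $k=-2$, which is within the scope of the conjecture but outside the scope of that theorem.

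For $k_n\geq 0$ your reduction $\BRS{\tilde{V}_{k_n}(\fing)}\cong\BRS{M/N}$ (with $M$ the maximal submodule of $V^{k_n}(\fing)$, generated by $e_{\theta}(-1)^{k_n+1}$) and the weight computation $\mu(\alpha_0^{\vee})=-k_n-2\notin\Z_{\geq 0}$ are correct, but ``enumerating the singular vectors of $M/N$ and discarding those killed by the reduction'' is precisely the hard, unaddressed step: nothing in Section \ref{sec:sing_vectors} controls the composition series of $M/N$, and Remark \ref{Rem:when-simples-are-iso} only identifies which irreducible \emph{quotients} survive, not whether the reduction of $M/N$ is simple. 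The Zhu-algebra route is likewise speculative: the compatibility $A(\BRS{V})\cong U(\fing,f_\theta)\text{-reduction of }A(V)$ is available in the paper only for $V=V^k(\fing)$ itself (De Sole--Kac), not for arbitrary quotients, and even granted such a compatibility, an isomorphism of Zhu algebras would not by itself yield simplicity of the vertex algebra $\BRS{\tilde{V}_k(\fing)}$ (Zhu's functor only governs the module category, not ideals of the vertex algebra). So the genuine missing idea --- a proof that the image of $\BRS{N}$ is the \emph{maximal} proper ideal of $\W^k(\fing,f_{\theta})$ --- remains missing in your proposal, which is exactly why the paper states this as a conjecture rather than a theorem.
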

 \begin{Rem}
Let $\fing$ and $k$ be as in Theorem \ref{Th:new-lisse}.
Then
$\W_k(\fing,f_{\theta})\not \cong \BRS{L(\lam)}$ for any irreducible admissible
  representation $L(\lam)$ of $\affg$.
Indeed, 
if   $k\leq -1$ (resp.\ if $k\geq -1$),
$
L(k\Lam_0)=V_k(\fing)$ (resp.\ $L(s_0\circ k\Lam_0)$)
is the unique irreducible highest weight representation of $\affg$ such
  that
$\W_k(\fing,f_{\theta})\cong \BRS{L(\lam)}$,
see Remark \ref{Rem:when-simples-are-iso}.
But $k\Lam_0$ (resp.\ $s_0\circ k\Lam_0$) is not an admissible weight
 since it is  not regular dominant.
 \end{Rem}

\section{Classification of lisse minimal $W$-algebras}
 \begin{Th}\label{Th:classfication}
\begin{enumerate}
 \item $\W_k(\mf{sp}_{2l},f_{\theta})$, $l\geq 2$, is lisse if and only if 
$k$ is admissible with denominator $2$, that is,
$k=p/2$ and $p$ is an odd number equal to or greater than $-1$.
 \item $\W_k(\mf{so}_7,f_{\theta})$ is lisse if and only if 
$k$ is admissible with denominator $2$, that is,
$k=p/2$ and $p$ is an odd integer equal to or greater than $-3$.
 \item $\W_k(\mf{so}_{2l+1},f_{\theta})$,
$l\geq 4$, is never lisse. 
 \item $\W_k(\mf{so}_{2l},f_{\theta})$, $l\geq 2$,  is lisse if and only if 
$k$ is an integer
 equal to or greater than $-2$.
 \item $\W_k(F_4,f_{\theta})$ is lisse if and only if 
$k$ is admissible with denominator $2$, that is,
$k=p/2$ and $p$ is an odd number equal to or greater than $-5$.
 \item $\W_k(E_6,f_{\theta})$ is lisse if and only if 
$k$ is an integer
 equal to or greater than $-3$.
 \item $\W_k(E_7,f_{\theta})$ is lisse if and only if 
$k$ is an integer
 equal to or greater than $-4$.
 \item $\W_k(E_8,f_{\theta})$ is lisse if and only if 
$k$ is an integer
 equal to or greater than $-6$.

\end{enumerate}
 \end{Th}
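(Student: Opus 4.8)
The plan is to reduce the entire classification to the computation of associated varieties of the relevant affine vertex algebras $V_k(\fing)$, and then to invoke the reduction functor. The key conceptual point is Theorem \ref{Th:previous}: by part (3)(a), $\W_k(\fing,f_\theta)=\BRS{V_k(\fing)}$ is nonzero precisely when $\overline{\mathbb{O}_{min}}\subset X_{V_k(\fing)}$, and since $X_{\BRS{V}}=X_V\cap\mc{S}_{min}$, the $W$-algebra is lisse exactly when $X_{V_k(\fing)}\cap\mc{S}_{min}$ is zero-dimensional. Because $X_{V_k(\fing)}$ is a $G$-invariant conic Poisson subvariety of $\fing^*\cong\fing$, and because $\mc{S}_{min}$ meets each orbit closure $\overline{\mathbb{O}}$ transversally with $\dim(\overline{\mathbb{O}}\cap\mc{S}_{min})=\dim\overline{\mathbb{O}}-\dim\mathbb{O}_{min}$, the lisse condition becomes: $X_{V_k(\fing)}$ contains $\overline{\mathbb{O}_{min}}$ but nothing strictly larger, i.e. $X_{V_k(\fing)}=\overline{\mathbb{O}_{min}}$ (or $X_{V_k(\fing)}=\{0\}$ when $k\in\Z_{\geq 0}$, in which case $\W_k=\C$ is trivially lisse). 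So the heart of the problem is to determine, case by case, exactly which $k$ give $X_{V_k(\fing)}\subseteq\overline{\mathbb{O}_{min}}$.

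First I would dispose of the ``if'' directions. For $\fing$ of type $D_l$ $(l\geq 4)$, $E_6$, $E_7$, $E_8$ and integers $k\geq -h^\vee/6-1$ (equivalently $k=k_n$ for some $n\geq 0$), Theorem \ref{Th:new-lisse} already gives lisseness, covering statements (4), (6), (7), (8) in the range below $\Z_{\geq 0}$; for $k\in\Z_{\geq 0}$ one has $X_{V_k(\fing)}=\{0\}$ and $\W_k=\C$. For the admissible cases---type $C_l$ $(=\mf{sp}_{2l})$, type $B_l=\mf{so}_{2l+1}$, and $F_4$---the admissible weights of denominator $2$ (resp.\ $3$) produce $X_{V_k(\fing)}=\overline{\mathbb{O}_{min}}$ by the general theory of \cite[Theorem 5.14]{Ara09b}, whence lisseness of $\W_k$ follows again from Theorem \ref{Th:previous}(3)(b). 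The precise numerical bounds (e.g.\ $p\geq -1$ for $\mf{sp}_{2l}$, $p\geq -3$ for $\mf{so}_7$, $p\geq -5$ for $F_4$) come from matching the admissibility condition $k+h^\vee\geq (\text{something})/\text{denominator}$ against the singular-vector levels recorded in Theorem \ref{Th2}(2),(3).

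Next, the ``only if'' directions, which are the more delicate half. Here I would argue contrapositively: if $k$ is not in the stated set, then $X_{V_k(\fing)}$ is strictly larger than $\overline{\mathbb{O}_{min}}$, so $X_{V_k(\fing)}\cap\mc{S}_{min}$ has positive dimension and $\W_k$ fails to be lisse (while staying nonzero). The mechanism is that the only singular vectors of $V^k(\fing)$ of degree $2$ forcing $X_{V_k(\fing)}\subseteq\mathcal{N}$ to descend into $\overline{\mathbb{O}_{min}}$ are the $\sigma(w_i)^{n+1}$, and by the ``only if'' half of Theorem \ref{Th2} these exist only at the enumerated levels; at any other $k$ the associated variety cannot be cut down to $\overline{\mathbb{O}_{min}}$, because $R_{V_k(\fing)}$ is then too large a quotient of $S(\fing)$. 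Statement (3), that $\W_k(\mf{so}_{2l+1},f_\theta)$ with $l\geq 4$ is \emph{never} lisse, is the cleanest instance: by Theorem \ref{Th2}(2) the two families of singular vectors occur at the incompatible levels $k=n-l+3/2$ and $k=n-2$, so the ideal $J_W$ is never fully generated inside $S(\fing)$ at a single $k$, and $X_{V_k(\fing)}$ always strictly contains $\overline{\mathbb{O}_{min}}$.

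The main obstacle I anticipate is precisely establishing the sharp ``only if'' boundary, i.e.\ showing that for $k$ outside the listed set one genuinely has $X_{V_k(\fing)}\supsetneq\overline{\mathbb{O}_{min}}$ rather than merely failing to prove containment. This requires controlling \emph{all} singular vectors (not just those of degree $2$), or equivalently pinning down $R_{V_k(\fing)}$ well enough to bound $\dim X_{V_k(\fing)}$ from below. The degree-$2$ singular vectors from Theorem \ref{Th2} give the upper bound on the variety at the good levels; converting their \emph{absence} at bad levels into a lower bound on $\dim X_{V_k(\fing)}$ is the crux, and I expect it to rest on the observation that $J_W$ (and nothing smaller compatible with $G$-invariance and the Poisson structure) is what defines $\overline{\mathbb{O}_{min}}$, together with Lemma \ref{Lem:Gan-Savin}. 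For the $B_l$ and $C_l$ cases the competing singular-vector levels make the incompatibility transparent; for the $D$ and $E$ cases one must additionally rule out that some higher-degree singular vector accidentally cuts the variety down at a non-listed integer $k$, which is where Conjecture \ref{Conj:simplicity} and the explicit arithmetic of $h^\vee/6+1$ enter.
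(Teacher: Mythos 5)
Your ``if'' direction is essentially the paper's (Theorem \ref{Th:new-lisse} for the $D$/$E$ integer levels, \cite{Ara09b} for the admissible $B$/$C$/$F_4$ levels), but your ``only if'' direction has a genuine gap that cannot be repaired along the lines you propose. You reduce lisseness of $\W_k(\fing,f_{\theta})$ to the equality $X_{V_k(\fing)}=\overline{\mathbb{O}_{min}}$ and then need to show this equality fails for every $k$ outside the listed set. That statement is precisely the ``only if'' half of Conjecture \ref{Conj:when-variety-is-the-minimal-nilpotent-orbit-closure}, which the paper explicitly leaves open (only its ``if'' half is known, from Theorem \ref{Th1} and \cite{Ara09b}). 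Your proposed mechanism --- that the absence of degree-$2$ singular vectors $\sigma(w_i)^{n+1}$ at a bad level prevents $X_{V_k(\fing)}$ from being cut down to $\overline{\mathbb{O}_{min}}$ --- is a non sequitur: the maximal submodule $\widehat{\mc{J}}_k$ need not be generated in degree $2$, and a higher-degree singular vector could perfectly well have a symbol whose zero locus is $\overline{\mathbb{O}_{min}}$. You acknowledge this yourself, but the only tools you offer to close it are Lemma \ref{Lem:Gan-Savin} (which says nothing about singular vectors of $V^k(\fing)$ of higher degree) and Conjecture \ref{Conj:simplicity}, which is also open. In particular your ``cleanest instance,'' statement (3) for $\mf{so}_{2l+1}$, $l\geq 4$, is not actually proved by the incompatibility of the two levels in Theorem \ref{Th2}(2). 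There is also a factual error in your setup: for $k\in\Z_{\geq 0}$ one has $\BRS{V_k(\fing)}=0$ but $\W_k(\fing,f_{\theta})\neq\C$ by Theorem \ref{TH:trivial-rep}; lisseness at those levels comes from $\W_k(\fing,f_{\theta})$ being a quotient of the lisse vertex algebra $\BRS{\tilde{V}_k(\fing)}$, as in the proof of Theorem \ref{Th:new-lisse}, not from triviality.

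The paper's actual proof of the ``only if'' direction avoids associated varieties of $V_k(\fing)$ entirely, and this is the idea missing from your proposal. By \cite[Theorem 5.1]{KacWak04} there is a vertex algebra embedding $\bigotimes_{i\geq 0}V^{k_i^{\natural}}(\fing_i)\hookrightarrow \W^k(\fing,f_{\theta})$, where $\fing^{\natural}=\bigoplus_i \fing_i$ and the shifted levels $k_i^{\natural}$ are the explicit degree-one polynomials in $k$ of Lemma \ref{Lem:shifted-levels}. If $\W_k(\fing,f_{\theta})$ is lisse, the integrability theorem of Dong--Mason \cite{DonMas06} forces the image of each $V^{k_i^{\natural}}(\fing_i)$, $i\geq 1$, in $\W_k(\fing,f_{\theta})$ to be integrable, hence $k_i^{\natural}\in\Z_{\geq 0}$ (Lemma \ref{Lem:necessary-condition-for-lisse}). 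Reading off the tables then gives exactly the stated arithmetic: for instance $k+\tfrac{1}{2}\in\Z_{\geq 0}$ for $\mf{sp}_{2l}$, and for $\mf{so}_{2l+1}$, $l\geq 4$, the two conditions $k+\tfrac{2l+1}{2}-2\in\Z_{\geq 0}$ and $k+2\in\Z_{\geq 0}$ are incompatible (one forces $k$ half-odd-integral, the other integral), which is how statement (3) is really proved.
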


If $\W_k(\fing,f_{\theta})=\C$, then
it is obviously lisse.
Hence it is natural to ask when $\W_k(\fing,f_{\theta})=\C$.
It turns out not every $W$-algebra admits one-dimensional
representations.

\begin{Th}\label{TH:trivial-rep}
Suppose $\fing$ is not of type $A_1$.
The following are equivalent:
\begin{enumerate}
 \item $\W^k(\fing,f_{\theta})$ admits a (non-twisted or Ramond-twisted)
       one-dimensional representation,
\item  $\W_k(\fing,f_{\theta})=\C$,  
\item
\begin{enumerate}
 \item $\fing$ belongs to the Deligne exceptional series 
and
$k=-h^{\vee}/6-1$, or 
\item $\fing=\mf{sp}_{2l}$, $l\geq 2$, and $k=-1/2$.
\end{enumerate}
\end{enumerate}
\end{Th}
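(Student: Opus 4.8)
The plan is to establish the cycle $(2)\Rightarrow(1)\Rightarrow(3)\Rightarrow(2)$, using throughout the description of $\W^k(\fing,f_{\theta})$ as a conformal vertex algebra strongly generated by its Virasoro field $L$, by weight-one currents $J^{\{a\}}$ ($a\in\fing^{\natural}$), and by weight-$3/2$ fields $G^{\{u\}}$ ($u\in\fing(-1/2)$), together with the Kac--Roan--Wakimoto operator product expansions \cite{KacRoaWak03}. The implication $(2)\Rightarrow(1)$ is immediate: if $\W_k(\fing,f_{\theta})=\C$, then pulling back the tautological module $\C$ along the surjection $\W^k(\fing,f_{\theta})\twoheadrightarrow\W_k(\fing,f_{\theta})=\C$ produces a one-dimensional $\W^k(\fing,f_{\theta})$-module.

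For $(1)\Rightarrow(3)$ I would first extract the central charge constraint. Let $M=\C m$ be a one-dimensional module, twisted or not, with $L_0m=hm$. The Virasoro field has integer conformal weight, so its modes $L_n$ act in every sector, and each $L_n$ with $n\ne0$ shifts the weight and hence annihilates the one-dimensional space $M$. Evaluating $[L_1,L_{-1}]=2L_0$ on $m$ yields $h=0$, and then $[L_2,L_{-2}]=4L_0+\tfrac{1}{2}c(k)$ yields $c(k)=0$. Thus any admissible level satisfies $c(k)=0$, a quadratic condition on $k$ with at most two roots for each $\fing$. To single out the list in (3) I would pass to the relevant (Ramond-twisted) Zhu algebra, which by \cite{De-Kac06} is the finite $W$-algebra $U(\fing,f_{\theta})$: the module $M$ produces a character $\chi$ of $U(\fing,f_{\theta})$ on which the image of $L$ vanishes. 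At this point I would invoke Premet's description \cite{Pre07} of the one-dimensional representations of $U(\fing,f_{\theta})$ in combination with the explicit $J$--$J$, $G$--$J$ and $G$--$G$ brackets: requiring these to act consistently by scalars on $M$ forces $\chi$ to annihilate $[\fing^{\natural},\fing^{\natural}]$ and constrains the residual $\fing^{\natural}$-level, and matching this against $c(k)=0$ should leave exactly the pairs $(\fing,k)$ of (3), in particular excluding the spurious second root of $c(k)=0$.

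For $(3)\Rightarrow(2)$ I would argue geometrically and then collapse. In case (3)(a), Theorem~\ref{Th1}(1) gives $X_{V_k(\fing)}=\overline{\mathbb{O}_{min}}$; since $k=-h^{\vee}/6-1<0$, Theorem~\ref{Th:previous}(2) identifies $\W_k(\fing,f_{\theta})$ with $\BRS{V_k(\fing)}$, which is lisse by Theorem~\ref{Th:previous}(3)(b), and we have $c(k)=0$. In case (3)(b), $k=-1/2$ is admissible for $\mf{sp}_{2l}$ with $X_{V_{-1/2}(\mf{sp}_{2l})}=\overline{\mathbb{O}_{min}}$ by \cite{Ara09b}, a direct computation gives $c(-1/2)=0$, and the same conclusion follows. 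It remains to upgrade ``simple, lisse, with $c(k)=0$'' to ``$=\C$''. Here I would use that the weight-one subspace of $\W_k(\fing,f_{\theta})$ consists of the images of the currents $J^{\{a\}}$, which generate an affine vertex subalgebra; since $c(k)=0$ forces $L$ to act as $0$ in the simple quotient, one shows via the Kac--Roan--Wakimoto relations that this affine subalgebra must be trivial, whence, as $\W_k(\fing,f_{\theta})$ is strongly generated by $L$, the $J^{\{a\}}$ and the $G^{\{u\}}$, the whole algebra collapses to $\C$.

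The step I expect to be the main obstacle is the middle implication $(1)\Rightarrow(3)$: the bare existence of a one-dimensional module (equivalently, $c(k)=0$ together with a compatible character of $U(\fing,f_{\theta})$) has to be promoted to the precise list (3), and in particular the second root of $c(k)=0$ must be ruled out for every $\fing$. I expect this to rest on Premet's classification \cite{Pre07} together with careful bookkeeping of the Kac--Roan--Wakimoto structure constants. A secondary but genuinely nontrivial point is the final ``lisse and $c(k)=0$ imply trivial'' collapse in $(3)\Rightarrow(2)$, which amounts to showing that the residual affine currents $J^{\{a\}}$ are null in the simple quotient rather than merely that the associated variety is a point.
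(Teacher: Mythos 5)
Your implication $(2)\Rightarrow(1)$ is the same trivial observation as in the paper, and your extraction of $h=0$ and $c(k)=0$ from a one-dimensional module is correct. But both nontrivial implications have genuine gaps.

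For $(1)\Rightarrow(3)$, the step you yourself flag as the main obstacle is not just unfinished; it is aimed at the wrong object. The finite $W$-algebra $U(\fing,f_{\theta})$ does not depend on $k$ at all, and by Premet's work it admits one-dimensional representations for every $\fing$ outside type $A$; so no analysis of its characters, even combined with $c(k)=0$, can single out the levels in (3). The constraint that actually pins down $k$ is the one you mention only in passing: by \cite[Theorem 5.1]{KacWak04} the vertex algebra $\W^k(\fing,f_{\theta})$ contains $\bigotimes_{i\geq 0}V^{k_i^{\natural}}(\fing_i)$, a one-dimensional module restricts to a one-dimensional module of each affine factor, and on such a module the commutator $[x(1),y(-1)]=[x,y](0)+k_i^{\natural}(x|y)$ forces $k_i^{\natural}=0$ for \emph{all} $i\geq 0$ (this is Lemma \ref{Lem:necessary-condition-for-lisse}(2)). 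The real content is then the case-by-case computation of the linear polynomials $k_i^{\natural}(k)$ (Lemma \ref{Lem:shifted-levels}): types with several summands, e.g.\ $\mf{sl}_{l+1}$ $(l\geq 3)$, $\mf{so}_7$, $\mf{so}_n$ $(n\geq 9)$, are excluded because two incompatible linear equations would have to hold simultaneously — a phenomenon your single scalar equation $c(k)=0$ cannot detect. Note the paper never needs $c(k)=0$ for this direction.

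For $(3)\Rightarrow(2)$, your geometric reduction (lisse via Theorem \ref{Th1} and Theorem \ref{Th:previous}(3)) is fine, but the final collapse rests on the assertion that $c(k)=0$ forces $L$ to act as $0$ in the simple quotient. This is unjustified and false as a general principle: a tensor product of two simple vertex algebras with opposite central charges is simple with $c=0$ and $L\neq 0$, and lisse-ness alone does not save the argument (Virasoro minimal models are simple, lisse, have zero weight-one space and $L\neq 0$). Killing the currents is the patchable part — at level $k_i^{\natural}=0$ in a lisse vertex algebra they must vanish by the integrability theorem of \cite{DonMas06}, which is exactly how the paper uses that reference in Lemma \ref{Lem:necessary-condition-for-lisse}(1) — but killing the $G^{\{u\}}$'s and $L$ is precisely the hard point, and your argument for it is circular. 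The paper's route is different and constructive: it takes the explicit singular vectors $\sigma(w_i)$ of Theorem \ref{Th2} (resp.\ the singular vector generating the maximal submodule in the admissible cases $C_l$, $G_2$, $F_4$), pushes them through the exact functor $H^{\frac{\infty}{2}+0}_{f_{\theta}}(-)$, shows their images in $\W^k(\fing,f_{\theta})$ are nonzero multiples of the currents $e_{\theta_i}(-1)$ (nonvanishing seen in $R_{\W^k(\fing,f_\theta)}=\C[\mc{S}_{min}]$, or from lisse-ness), so the whole weight-one space lies in the kernel ideal, and then uses the commutation relations of \cite[Theorem 5.1]{KacWak04} to drag the generators $G^{v}$ and the conformal vector into that ideal, so the quotient — hence the simple quotient — is $\C$; type $A_2$ is handled by citing \cite{Ara10BP}. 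Without an argument of this kind (or a proof that a simple lisse vertex algebra with $c=0$ is trivial, which is not available), your proof of $(3)\Rightarrow(2)$ does not close.
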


\begin{Rem} 
If $\fing=\mf{sl}_2$, then $f_\theta=f_{\rm reg}$ is regular, $\W_k(\fing,f_\theta)=\W_k(\mf{sl}_2,f_{\rm reg})$ 
is the simple Virasoro vertex algebra provided that $k\not=-2$, and the results are well-known\footnote{Note 
that $\W_{r-2}(\mf{sl}_2,f_{\rm reg}) \cong \W_{1/r-2}(\mf{sl}_2,f_{\rm reg})$ for any $r\in\C^*$.}. Namely,  
\begin{itemize}
\item[-] $\W_k(\mf{sl}_2,f_{\rm reg})$ is lisse if and only if either $k+2=p/q$, with $p,q\in \Z_{\geq 0}$, $(p,q)=1$ 
and $p,q\geq 2$, or $k+2=0$ (cf.~\cite{Ara12}), 
\item[-] $\W_k(\mf{sl}_2,f_{\rm reg})=\C$ if and only if either $k+2=2/3$, or $k+2=3/2$, or $k+2=0$.
\end{itemize}
\end{Rem}

The rest of this section is devoted to the proof of Theorem
\ref{Th:classfication}
and
Theorem \ref{TH:trivial-rep}.

Let $\fing_0$ be the center of the reductive Lie algebra
$\fing^{\natural}$,
so that
\begin{align*}
\fing^{\natural}=\bigoplus_{i\geq 0} \fing_i.
\end{align*}

Define an invariant bilinear form
on $\fing_i$, $i\geq 0$, by
\begin{align*}
 (x|y)^{\natural}_i:=(k+\frac{h^{\vee}}{2})(x|y)-\frac{1}{4}\left(\on{tr}_{\fing(0)}(\ad x \ad
 y)
\right),
\end{align*}
where $(~|~)$ is the normalized inner product of $\fing$ as before.
Then there exists a polynomial 
$k^{\natural}_i$ of $k$  of degree $1$
such that
\begin{align*}
 (~|~)^{\natural}_i=k^{\natural}_i(~|~)_i,
\end{align*}
where  $(~|~)_i$ is the normalized inner product of $\fing_i$,
that is,
$(\theta_i|\theta_i)=2$.

By \cite[Theorem 5.1]{KacWak04},
we have an embedding
\begin{align*}
 \bigotimes_{i\geq 0} V^{k^{\natural}_i}(\fing_i)
 \hookrightarrow \W^k(\fing,f_{\theta})
\end{align*}
of vertex algebras.

 \begin{Lem}\label{Lem:necessary-condition-for-lisse}
\begin{enumerate}
\item Suppose that $\W_k(\fing,f_{\theta})$ is lisse.
Then the value of $k_i^{\natural}$ for all $i\geq 1$ 
must be a nonnegative integer.
\item Suppose that $\W^k(\fing,f_{\theta})$ admits a (non-twisted or Ramond-twisted)
one-dimensional representation.
Then the value of $k_i^{\natural}$ for all $i\geq 0$ 
must be zero.
\end{enumerate}
 \end{Lem}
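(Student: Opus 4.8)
The plan is to push both hypotheses through the Kac--Wakimoto embedding
\begin{align*}
\bigotimes_{i\geq 0}V^{k^{\natural}_i}(\fing_i)\hookrightarrow \W^k(\fing,f_{\theta})
\end{align*}
and then to detect the level $k^{\natural}_i$ from an intrinsic property of the image. Two features of this embedding will be used repeatedly: it identifies each $\fing_i$ with a space of conformal weight-one vectors of $\W^k(\fing,f_{\theta})$, on which the affine relations hold at level $k^{\natural}_i$; and, since $\W^k(\fing,f_{\theta})$ is strongly generated by $\fing^{\natural}=\bigoplus_{i\geq 0}\fing_i$ (weight $1$), by the fields attached to $\fing(1/2)$ (weight $3/2$) and by the conformal vector (weight $2$), the weight-one subspace of $\W^k(\fing,f_{\theta})$ is exactly $\fing^{\natural}$. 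The same description holds for the simple quotient $\W_k(\fing,f_{\theta})$, except that some of the ideals $\fing_i$ may collapse to zero there.

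For (1) I would invoke the integrability theorem of Dong and Mason for $C_2$-cofinite vertex operator algebras: if $V$ is lisse then $V_1$ is a reductive Lie algebra and $V$ is an integrable module over the corresponding affine Kac--Moody algebra, the level on each simple ideal being a nonnegative integer. Applied to the lisse algebra $\W_k(\fing,f_{\theta})$, whose weight-one space is a quotient of $\fing^{\natural}$, this yields the claim for each $i\geq 1$ as follows. If $\fing_i$ survives as a simple ideal of $\W_k(\fing,f_{\theta})_1$, then its level $k^{\natural}_i$ is forced to be a nonnegative integer. If instead the currents of $\fing_i$ vanish in $\W_k(\fing,f_{\theta})$, then the image of $V^{k^{\natural}_i}(\fing_i)$ is the trivial vertex algebra $\C$, a one-dimensional module, and the computation carried out in (2) below forces $k^{\natural}_i=0$. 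Either way $k^{\natural}_i\in\Z_{\geq 0}$.

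For (2) I would argue directly. A (possibly Ramond-twisted) one-dimensional representation of $\W^k(\fing,f_{\theta})$ restricts along the embedding to a one-dimensional module $\C v$ over $V^{k^{\natural}_i}(\fing_i)$; because the $\fing^{\natural}$-currents have integer conformal weight, they are integrally moded even in the canonically twisted sector, so the affine relations at level $k^{\natural}_i$ still govern the action. As $\C v$ occupies a single conformal degree, every mode $x_{(n)}$ with $n\neq 0$ and $x\in\fing_i$ kills $v$; in particular $x_{(1)}v=y_{(-1)}v=0$, so $[x_{(1)},y_{(-1)}]v=0$. Comparing with the Borcherds commutator formula
\begin{align*}
[x_{(1)},y_{(-1)}]=[x,y]_{(0)}+k^{\natural}_i\,(x|y)_i
\end{align*}
and observing that $[x,y]_{(0)}$ acts on $\C v$ through a character of $\fing_i$ (which vanishes for $\fing_i$ semisimple and is $0$ outright when $\fing_i=\fing_0$ is abelian), I obtain $k^{\natural}_i\,(x|y)_i=0$ for all $x,y$; nondegeneracy of $(~|~)_i$ then gives $k^{\natural}_i=0$ for every $i\geq 0$.

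The main obstacle is squarely the input to (1): one has to make sure the Dong--Mason theorem applies to $\W_k(\fing,f_{\theta})$ with its actual $\tfrac12\Z_{\geq 0}$-grading --- either by restricting to the integer-graded vertex subalgebra, which still contains all of $\fing^{\natural}$, or by checking that the underlying spanning argument survives half-integral weights --- and that the affine level seen by the theorem inside $\W_k(\fing,f_{\theta})$ is literally $k^{\natural}_i$ with the normalization of $(~|~)_i$, so that \emph{integrable} translates into $k^{\natural}_i\in\Z_{\geq 0}$. Matching these normalizations, and identifying $\W_k(\fing,f_{\theta})_1$ with a quotient of $\fing^{\natural}$, is the delicate bookkeeping; part (2), by contrast, is an elementary consequence of the embedding and the current OPE.
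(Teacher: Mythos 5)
Your proposal is correct and follows essentially the same route as the paper: part (1) is the Dong--Mason integrability theorem applied to the image of $V^{k_i^{\natural}}(\fing_i)$ inside the lisse algebra $\W_k(\fing,f_{\theta})$, and part (2) is restriction of the one-dimensional representation along the Kac--Wakimoto embedding. Your write-up simply makes explicit the details the paper leaves tacit (the commutator computation showing a one-dimensional module forces level zero, the integral moding of the weight-one currents in the twisted sector, and the degenerate case where the $\fing_i$-currents vanish in the simple quotient), so no further changes are needed.
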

 
 \begin{proof}
 (1) By \cite{DonMas06}, if a lisse vertex algebra $V$ contains a quotient
of an affine vertex algebra as a vertex subalgebra, this quotient must be integrable. 
With $V=\W_k(\fing,f_{\theta})$, we deduce that  the simple quotient 
$V_{k^{\natural}_i}(\fing_i)$ 
must be integrable for any $i \geq 1$, that is, 
$k_i^{\natural}$ is a nonnegative integer for any $i\geq 1$. 

(2) If  $\W^k(\fing,f_{\theta})$ admits a (non-twisted or Ramond-twisted)
  one-dimensional representation,
by restriction we obtain that $V^{k_i^\natural}(\fing_i)$, for $i\geq 0$, 
admits a one-dimensional representation. Hence $k_i^{\natural}=0$ 
for all $i\geq 0$.
 \end{proof}

\begin{Lem}\label{Lem:shifted-levels} 
The reductive Lie algebras $\fing^\natural=\bigoplus_{i\geq 0} \fing_i$ 
and the polynomials $k_i^\natural$ are described in the below Tables 
\ref{tab:k} and \ref{tab2:k}. 
{\begin{table}[h] \tiny 
\begin{center}
\begin{tabular}{|l|c|c|c|c|c|c|}
\hline &&&&&& \\[-0.5em]
& $\mf{sl}_3$ & $\mf{sl}_{l+1}$, $l\geq 3$ & $\mf{sp}_{2l}$, $l\geq 2$ 
& $\mf{so}_{7}$ & $\mf{so}_8$ & $\mf{so}_{n}$, $n\geq 9$ \\[0.25em]
\hline &&&&&&\\[-0.5em]
$\fing^{\natural}$& $\fing_0$, & $\fing_0\+ \fing_1$ & 
$\fing_1$, & $\fing_1\+\fing_2$,
& $\bigoplus_{i=1}^3\fing_i$,
&$\fing_1\+\fing_2$,\\[0.25em]
& $\fing_0\cong\C$ & $\fing_0\cong\C$, $\fing_{1}\cong\mf{sl}_{l-1}$  & 
$\fing_1\cong\mf{sp}_{2l-2}$ & 
$\fing_1\cong \fing_2\cong \mf{sl}_2$ & 
$\fing_i\cong \mf{sl}_2$,
&
$\fing_1\cong \mf{sl}_2$,
$\fing_2\cong \mf{so}_{n-4}$ \\[0.25em]
\hline &&&&&&\\[-0.5em]
$k_i^{\natural}$ & $ k_0^{\natural}=k+\frac{3}{2}$ & $k_0^{\natural}=k+\frac{l+1}{2},$ 
 & $k_1^{\natural}=
k+\frac{1}{2}$ & $ k_1^{\natural}=k+\frac{3}{2}$,  & $ k_i^{\natural}=k+2$, 
& $k_1^{\natural}
 =k+\frac{n}{2}-2,$ 
\\[0.25em]
&  & 
$k_1^{\natural}=k+1$ &  & 
$k_2^{\natural}=2k+4$ &  $i\in\{1,2,3\}$ 
& $k_2^{\natural}
 =k+2$ \\[0.25em]
 \hline
\end{tabular}
\vspace{.25cm}
\caption{$\fing^{\natural}=\bigoplus_{i\geq 0} \fing_i$ 
and $k_i^\natural$ for the classical types} \label{tab:k}
\end{center}
\end{table}}
{\begin{table}[h] \tiny 
\begin{center}
\begin{tabular}{|l|c|c|c|c|c|}
\hline &&&&& \\[-0.5em]
& $G_2$ & $F_4$ & $E_6$ 
& $E_7$ & $E_8$ \\[0.25em]
\hline &&&&& \\[-0.5em]
$\fing^{\natural}$
& $\mf{sl}_2$ 
& $\mf{sp}_6$ 
& $\mf{sl}_6$ 
& $\mf{so}_{12}$ 
& $E_7$ 
\\[0.5em]
$k_1^{\natural}$ 
& $3k+5$ 
& $k+\frac{5}{2}$ 
& $k+3$
& $k+4$ 
&  $k+6$ 
\\[0.25em]
\hline 
\end{tabular}
\vspace{.25cm}
\caption{$\fing^{\natural}=\bigoplus_{i\geq 0} \fing_i$ 
and $k_i^\natural$ for the exceptional types} \label{tab2:k}
\end{center}
\end{table}}

 \end{Lem}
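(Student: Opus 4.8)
The plan is to treat the two assertions of the lemma separately: first the identification of $\fing^{\natural}=\bigoplus_{i\geq 0}\fing_i$ as an abstract reductive Lie algebra, and then the computation of each degree-one polynomial $k_i^{\natural}$. For the structural part I would use the description recalled before the statement, namely that the simple roots of $\fing^{\natural}$ are exactly the simple roots of $\fing$ orthogonal to $\theta$. Thus the semisimple part $[\fing^{\natural},\fing^{\natural}]$ is the subalgebra generated by those simple roots, which is read off the Dynkin diagram once $\theta$ is written in Bourbaki coordinates, while the centre $\fing_0$ makes up the difference between $\on{rank}\fing^{\natural}=\on{rank}\fing-1$ and the rank of the semisimple part. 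Running through the types recovers the first rows of Tables \ref{tab:k} and \ref{tab2:k}; the only points needing attention are the coincidence $\mf{so}_4\cong\mf{sl}_2\oplus\mf{sl}_2$, which produces the three factors in type $D_4$, and the fact that a centre $\fing_0$ appears only in type $A$.

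The heart of the matter is the second part, and here I would derive a single closed formula. Fix $i\geq 1$ and $x,y\in\fing_i$. Since $x,y\in\fing(0)$, the operator $\ad x\,\ad y$ preserves each graded piece $\fing(j)$, so its restriction to $\fing(0)$ has a well-defined trace. Now $\fing_i$ acts trivially on $\C h_{\theta}$, on the centre $\fing_0$, and on every other simple factor $\fing_j$ with $j\neq i$, while it acts by its own adjoint representation on $\fing_i$ itself; hence
\begin{align*}
\on{tr}_{\fing(0)}(\ad x\,\ad y)=\on{tr}_{\fing_i}(\ad x\,\ad y)=\kappa_{\fing_i}(x,y)=2h_i^{\vee}(x|y)_i,
\end{align*}
where $\kappa_{\fing_i}$ is the Killing form of $\fing_i$ and $h_i^{\vee}$ its dual Coxeter number. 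Writing $j_i$ for the embedding index determined by $(x|y)=j_i(x|y)_i$ on $\fing_i$ and substituting into the defining relation $(~|~)_i^{\natural}=k_i^{\natural}(~|~)_i$ yields
\begin{align*}
k_i^{\natural}=\Bigl(k+\frac{h^{\vee}}{2}\Bigr)j_i-\frac{h_i^{\vee}}{2}=j_i k+\frac{j_i h^{\vee}-h_i^{\vee}}{2}.
\end{align*}
For the centre $\fing_0$ the same computation applies with $\on{tr}_{\fing(0)}(\ad x\,\ad y)=0$, since a central element acts trivially on all of $\fing(0)$, giving $k_0^{\natural}=k+h^{\vee}/2$ once $(~|~)_0$ is normalised as the restriction of $(~|~)$.

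It then remains to insert the numerical data. The dual Coxeter numbers $h^{\vee}$ and $h_i^{\vee}$ are standard, and the embedding index $j_i$ equals $1$ precisely when the highest root $\theta_i$ of $\fing_i$ is a long root of $\fing$, and equals the length ratio $(\theta|\theta)/(\theta_i|\theta_i)$ otherwise. Thus $j_i=1$ in all simply-laced cases as well as for the single factors of $C_l$ and $F_4$, whose highest roots are long, whereas $j_i=2$ for the short $\mf{sl}_2$ sitting inside $\mf{so}_7$ and $j_i=3$ for the short $\mf{sl}_2$ inside $G_2$, accounting respectively for the entries $2k+4$ and $3k+5$. A quick check confirms the remaining table entries: for instance $G_2$ gives $3k+(3\cdot 4-2)/2=3k+5$, $F_4$ gives $k+(9-4)/2=k+5/2$, and $E_8$ gives $k+(30-18)/2=k+6$.

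I expect the main obstacle to be precisely this bookkeeping rather than any single calculation: one must correctly identify each factor $\fing_i$ \emph{together with} the length of its highest root inside $\fing$, since a misread long/short distinction alters both the slope $j_i$ and the intercept $(j_i h^{\vee}-h_i^{\vee})/2$ of $k_i^{\natural}$. The genuinely delicate cases are $G_2$ and $\mf{so}_7$, where $j_i\neq 1$, the triality-related splitting $\fing^{\natural}\cong\mf{sl}_2\oplus\mf{sl}_2\oplus\mf{sl}_2$ in type $D_4$, and the normalisation of the one-dimensional centre in type $A$; once these are settled, every entry of both tables follows by direct substitution into the displayed formula for $k_i^{\natural}$.
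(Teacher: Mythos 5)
Your proposal is correct. The paper offers no actual argument here (its proof reads ``The verifications are easy and left to the reader''), and your derivation is exactly the intended direct verification, cleanly organized: the trace identity $\on{tr}_{\fing(0)}(\ad x\,\ad y)=\kappa_{\fing_i}(x,y)=2h_i^{\vee}(x|y)_i$ for $x,y\in\fing_i$, $i\geq 1$ (and $=0$ on the centre), combined with the embedding index $j_i=(\theta|\theta)/(\theta_i|\theta_i)$, gives the closed formula $k_i^{\natural}=j_ik+\tfrac{1}{2}(j_ih^{\vee}-h_i^{\vee})$, and substituting the standard data reproduces every entry of Tables \ref{tab:k} and \ref{tab2:k}, including the delicate short-root cases ($j=3$ for $G_2$, $j=2$ for the $\mf{so}_3\cong\mf{sl}_2$ factor in $\mf{so}_7$) and the convention that $(~|~)_0$ on the one-dimensional centre in type $A$ is the restriction of $(~|~)$.
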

 
 \begin{proof} 
 The verifications are easy and left to the reader.
 \end{proof}

 \begin{proof}[Proof of Theorem \ref{Th:classfication}]
The ``if'' part of Theorem \ref{Th:classfication}
has been already proven in
Theorem \ref{Th:new-lisse}
and \cite[Theorem 5.18]{Ara09b}, 
and the ``only if'' part follows from Lemmas
\ref{Lem:necessary-condition-for-lisse} and \ref{Lem:shifted-levels}.
 \end{proof}

 \begin{Rem}\label{Rem:typeC}
For $\fing=\mf{sp}_{2l}$ it is possible to show the following.
\begin{align*}
 A(V_{-1/2}(\fing))\cong U(\fing)/\mc{J}_{W_1}\cong
 \C\times (L_{\fing}(\varpi_1)^*\otimes_{\C} L_{\fing}(\varpi_1))
 \times U(\fing)/\mc{J}_0,
\end{align*}
where $\mc{J}_{W_1}$ is the ideal generated by
  $W_1:=L_{\fing}(\theta+\theta_1)\subset W$.
This implies that $\mc{J}_0$ is generated by $\mc{J}_{W_1}$ and 
$\Omega-c_0$.
 \end{Rem}
 \begin{Conj}\label{Conj:remaining-classification}
\begin{enumerate}
\item
$\W_k(\mf{sl}_3,f_{\theta})$ is lisse if and only if 
$k$ is admissible with denominator $2$,
that is,
$k=p/2$ and $p$ is an odd integer equal or greater than $-3$.

	      \item $\W_k(\mf{sl}_n,f_{\theta})$,
$n\geq 4$,
is never lisse.
\item $\W_k(G_2,f_{\theta})$ is lisse if and only if $k$ is admissible with
      denominator $3$, 
      or an  integer equal to or greater than $-1$. 
	     \end{enumerate}
 \end{Conj}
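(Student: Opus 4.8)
The plan is to reduce the entire classification to the associated-variety statement of Conjecture~\ref{Conj:when-variety-is-the-minimal-nilpotent-orbit-closure} by means of the minimal Drinfeld--Sokolov reduction. First I would isolate the master reduction: for $k\notin\Z_{\geq 0}$ one has $\W_k(\fing,f_\theta)\cong \BRS{V_k(\fing)}$ by Theorem~\ref{Th:previous}~(2) and $X_{\BRS{V_k(\fing)}}=X_{V_k(\fing)}\cap\mc{S}_{min}$ by Theorem~\ref{Th:previous}~(3). Since $\mc{S}_{min}$ meets $\mathbb{O}_{min}$ transversally in codimension $\dim\mathbb{O}_{min}$, every $G$-invariant conic $X$ containing $\overline{\mathbb{O}_{min}}$ satisfies $\dim(X\cap\mc{S}_{min})\geq\dim X-\dim\mathbb{O}_{min}$; combined with Theorem~\ref{Th:previous}~(3)(a) this yields that $\W_k(\fing,f_\theta)$ is nonzero and lisse if and only if $X_{V_k(\fing)}=\overline{\mathbb{O}_{min}}$. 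Consequently part~(1) becomes Conjecture~\ref{Conj:when-variety-is-the-minimal-nilpotent-orbit-closure}~(2) for type $A_2$, part~(3) becomes Conjecture~\ref{Conj:when-variety-is-the-minimal-nilpotent-orbit-closure}~(3) for $G_2$, and the ``if'' directions at admissible levels follow from Theorem~\ref{Th1} and \cite[Theorem~5.14]{Ara09b}, exactly as in the proof of Theorem~\ref{Th:classfication}.

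For the ``only if'' directions I would feed Lemma~\ref{Lem:shifted-levels} into Lemma~\ref{Lem:necessary-condition-for-lisse}: lisseness forces each $k_i^\natural$ with $i\geq 1$ into $\Z_{\geq 0}$, which means $k+1\in\Z_{\geq 0}$ for $\fing=\mf{sl}_n$ and $3k+5\in\Z_{\geq 0}$ for $\fing=G_2$. A useful observation is that for $G_2$ this single constraint already carves out precisely the conjectured set: writing $3k+5=m\in\Z_{\geq 0}$ we have $k+h^\vee=(m+7)/3$, and one checks that $k$ is an integer $\geq -1$ when $m\equiv 2\pmod 3$, and is admissible of denominator $3$ otherwise (the numerator $m+7\geq 6=h$ is prime to $3$). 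Thus the ``only if'' part of~(3) is immediate and only sufficiency remains. In type $A$ the genuinely new ingredient, absent from the cases settled in Theorem~\ref{Th:classfication}, is the central summand $\fing_0\cong\C$ of $\fing^\natural$: its current generates a Heisenberg subalgebra $V^{k_0^\natural}(\fing_0)\hookrightarrow\W^k(\fing,f_\theta)$, and since the Heisenberg is simple and not lisse for $k_0^\natural\neq 0$, the argument of \cite{DonMas06} forces this current to vanish in any lisse simple quotient.

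Granting the master reduction, part~(2) would follow once $X_{V_k(\mf{sl}_n)}=\overline{\mathbb{O}_{min}}$ is shown impossible for $n\geq 4$. The minimal orbit of $\mf{sl}_n$ has Jordan type $[2,1^{n-2}]$, whereas by the description of associated varieties of admissible affine vertex algebras in type $A$ (\cite{Ara09b}) the variety $X_{V_k(\mf{sl}_n)}$ at an admissible level with $k+h^\vee=p/q$ is the closure of the nilpotent orbit of Jordan type $[q^a,r]$, where $n=qa+r$ with $0\leq r<q$; this equals $[2,1^{n-2}]$ only for $n\leq 3$. Hence no admissible level produces the minimal orbit for $n\geq 4$: for $k\notin\Z_{\geq 0}$ this excludes lisseness modulo the ``only if'' of Conjecture~\ref{Conj:when-variety-is-the-minimal-nilpotent-orbit-closure}, while the remaining integer levels $k\geq -1$ (where $k_0^\natural=k+\frac{n}{2}\neq 0$ and $\W_k(\mf{sl}_n,f_\theta)\neq\C$ by Theorem~\ref{TH:trivial-rep}) are excluded by the Heisenberg obstruction above, provided one verifies that the central current does not collapse in the simple quotient. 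For $\mf{sl}_3$, where $\fing^\natural=\fing_0$ has no semisimple part, the numerical constraint is vacuous and the statement is literally equivalent to Conjecture~\ref{Conj:when-variety-is-the-minimal-nilpotent-orbit-closure}~(2).

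The hard part is twofold. First is the ``only if'' direction of Conjecture~\ref{Conj:when-variety-is-the-minimal-nilpotent-orbit-closure}, namely controlling $X_{V_k(\fing)}$ at \emph{non}-admissible levels and excluding an unexpected minimal associated variety for $\mf{sl}_n$ $(n\geq 4)$ and for the unwanted levels of $\mf{sl}_3$; to attack it I would compute the degree-two singular vectors of $V^k(\fing)$ in types $A$ and $G_2$, as in Theorem~\ref{Th2}, to bound $X_{V_k(\fing)}$ above by a proper $G$-stable subvariety of $\mc{N}$, and then pin the variety down via the Zhu-algebra and central-character analysis of Section~\ref{sec:Joseph}. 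Second, and I expect more serious, is sufficiency at the non-negative integer levels of $G_2$: there $V_k(\fing)$ is integrable, so $\BRS{V_k(\fing)}=0$ and the master reduction says nothing, and one must prove lisseness of $\W_k(G_2,f_\theta)$ directly --- presumably by realizing it as a lisse extension built on the integrable subalgebra $V_{k_1^\natural}(\mf{sl}_2)$ with $k_1^\natural=3k+5$ --- together with the non-admissible case $k=-1$, which demands an explicit verification that $X_{V_{-1}(G_2)}=\overline{\mathbb{O}_{min}}$.
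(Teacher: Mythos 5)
The first thing to note is that this statement is a \emph{conjecture} in the paper: the authors give no proof of it, remarking only that the ``if'' part follows from \cite[Theorem 5.18]{Ara09b}. Your proposal faithfully reproduces the paper's surrounding framework---the reduction via Theorem \ref{Th:previous} to the associated variety $X_{V_k(\fing)}$ (valid only for $k\notin\Z_{\geq 0}$, as you note), the necessary conditions from Lemmas \ref{Lem:necessary-condition-for-lisse} and \ref{Lem:shifted-levels}, and the link to Conjecture \ref{Conj:when-variety-is-the-minimal-nilpotent-orbit-closure}---and your arithmetic for $G_2$ is correct: the single constraint $3k+5\in\Z_{\geq 0}$ does carve out exactly the conjectured set, so the ``only if'' of part (3) indeed already follows from the paper's own lemmas. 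But the steps you yourself flag as open are precisely the conjectural content of the statement: the ``only if'' of Conjecture \ref{Conj:when-variety-is-the-minimal-nilpotent-orbit-closure} at non-admissible levels (which governs $\mf{sl}_3$ and the level $k=-1$ cases), and lisseness of $\W_k(G_2,f_\theta)$ at integer $k\geq -1$, where $\BRS{V_k(\fing)}=0$ and the reduction of the simple affine quotient says nothing. So at best this is an attack plan, not a proof, and there is no paper proof to match it against.

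Beyond the admitted gaps, one step you positively assert is wrong: the ``Heisenberg obstruction'' in type $A$. Lisseness ($C_2$-cofiniteness) is \emph{not} inherited by vertex subalgebras, and lattice vertex algebras are the standard counterexample: they are lisse (indeed rational) yet contain rank-one Heisenberg subalgebras at nonzero level. The theorem of \cite{DonMas06} invoked in Lemma \ref{Lem:necessary-condition-for-lisse} yields integrability only for the affine vertex subalgebras attached to the \emph{simple} summands $\fing_i$, $i\geq 1$; this is exactly why the paper's lemma excludes $i=0$, and why $\mf{sl}_n$ and $G_2$ are left conjectural while the cases of Theorem \ref{Th:classfication} are settled. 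Note also that the central current could not ``vanish in the lisse simple quotient'' even in principle: for $k_0^{\natural}\neq 0$ the Heisenberg vertex algebra $V^{k_0^{\natural}}(\fing_0)$ is simple and any vertex algebra homomorphism preserves the vacuum, so its image in $\W_k(\fing,f_{\theta})$ is a faithful copy---it simply is not an obstruction to lisseness. Consequently your exclusion of the levels $k\in\Z_{\geq 0}$ for $\mf{sl}_n$ collapses, and with it part (2) (whose remaining level $k=-1$ already rests on the open ``only if'' of Conjecture \ref{Conj:when-variety-is-the-minimal-nilpotent-orbit-closure}) as well as the ``only if'' of part (1) at nonnegative integer levels. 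Your Jordan-type computation ruling out the minimal orbit as an \emph{admissible-level} associated variety for $n\geq 4$ is fine as far as it goes, but it does not reach the non-admissible levels, which is where the conjecture actually lives.
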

The ``if'' part of Conjecture \ref{Conj:remaining-classification}
follows from \cite[Theorem 5.18]{Ara09b}.

 \begin{proof}[Proof of Theoren \ref{TH:trivial-rep}]
Clearly (2) implies (1).
The direction (1) $\Rightarrow$ (3)
follows from Lemmas 
\ref{Lem:necessary-condition-for-lisse} and \ref{Lem:shifted-levels}.

Let us show (3) implies (2).

The $A_2$ case follows from \cite{Ara10BP}.

Assume that $\fing$ is of type $D_l$, $E_6$,
$E_7$, or $E_8$.
Note that
 $k=k_0$ in \eqref{eq:value-of-k}.
Let $N$ be the submodule of $V^k(\fing)$ generated by
$v_i=\sigma(w_i)$, for all $i$, 
and set $\tilde{V}_{k}(\fing)=V^k(\fing)/N$
 as in Section \ref{sec:proofs}.
By Theorem \ref{Th:previous} (1) 
we have an exact sequence
\begin{align*}
 0\ra \BRS{N}\ra \BRS{V^k(\fing)}
\ra \BRS{\tilde{V}_k(\fing)}\ra 0
\end{align*}
of 
$\W^k(\fing,f_{\theta})$-modules.
The image $\bar{v}_i$ of $v_i\in N$  in 
$ \BRS{V^k(\fing)}=\W^k(\fing,f_{\theta})$
is nonzero,
since  its image in $R_{\W_k(\fing,f_{\theta})}=\C[\mc{S}_{min}]$ is nonzero
and coincides with 
 $e_{\theta_i}$ under the identification
$\C[\mc{S}_{min}]=S(\fing^{f_{\theta}})$,
where
$e_{\theta_i}$ is the highest root vector of $\fing_i$.
By weight consideration one finds that
$\bar{v}_i$  
coincides with
$e_{\theta_i}(-1)\in V^{k_i^{\natural}}(\fing_i)\subset \W^k(\fing,f_{\theta})$
up to non-zero constant multiplication.

Since
$\W^k(\fing,f_{\theta})_1=\fing^{\natural}=\bigoplus_{i\geq 1}\fing_i$,
the whole weight one space $\W^k(\fing,f_{\theta})_1$ is included in the
image of $\BRS{N}$.
Then
 from the commutation relations of $\W^k(\fing,f_{\theta})$
  described in 
\cite[Theorem 5.1]{KacWak04}
 it follows that
all the  generators $G^{v}$, 
$v\in \fing_{1/2}$,
defined in \cite{KacWak04},
and
 the conformal vector
are also
in the image of $\BRS{N}$.
Therefore
$\BRS{\tilde{V}_k(\fing)}$ must be trivial, and hence, so is
its simple quotient
  $\W_k(\fing,f_{\theta})$.

Assume that
$\fing$ is of type  $C_l$, $G_2$ or $F_4$, so that
$\fing^{\natural}$ is simple
and
$k$ is admissible,
and hence the maximal submodule
$N_k$
of 
$V^k(\fing)$ is generated by a singular vector $v$.
By Theorem \ref{Th:previous} (1), (2) we have the exact sequence
\begin{align*}
0\ra \BRS{N}
\ra \W^k(\fing,f_{\theta})\ra \W_k(\fing,f_{\theta})\ra 0.
\end{align*}
Also, by \cite[Theorem 6.3.1]{KacWak04}
$\BRS{N}$ is 
generated by the image $\bar v$ of $v$.
Since the image of $\BRS{N}$ in $\W^k(\fing,f_{\theta})$ is nonzero
as $\W_k(\fing,f_{\theta})$ is lisse \cite{Ara09b},
the image of $\bar v$ in $\W^k(\fing,f_{\theta})$
is nonzero.
Hence, as in the same manner as above,
by weight consideration
it follows that
$\W^1(\fing,f_{\theta})_1$ is included in the image of $\BRS{N}$,
which gives that
$\W_k(\fing,f_{\theta})=\C$ 
as required.
 \end{proof}
\newcommand{\etalchar}[1]{$^{#1}$}



\end{document}